\documentclass[12pt,reqno]{amsart}

\usepackage[utf8]{inputenc}
\usepackage{amssymb}
\usepackage{amsmath}
\usepackage{amsthm}
\usepackage{mathtools}
\usepackage{amsfonts}
\usepackage{bbm}
\usepackage{bookmark}
\usepackage{hyperref}
\hypersetup{pdfstartview={FitH}}
\usepackage{color}
\usepackage{xcolor}
\usepackage{mathrsfs}
\usepackage{enumitem}
\setlist[enumerate]{itemsep=0.7ex}
\usepackage[normalem]{ulem}
\usepackage{tikz}
\usepackage{tikz-3dplot}
\usetikzlibrary{trees,shapes,backgrounds}
\usepackage{caption}
\usetikzlibrary{arrows, arrows.meta, calc, positioning}
\usetikzlibrary{shapes.multipart}

\newcommand{\Rom}[1]{\uppercase\expandafter{\romannumeral #1}}

\hypersetup{unicode}
\hypersetup{breaklinks=true}

\theoremstyle{plain}
\newtheorem{thm}{Theorem}
\newtheorem{lemma}[thm]{Lemma}

\newtheorem{prop}[thm]{Proposition}
\newtheorem{cor}{Corollary}

\theoremstyle{definition}
\newtheorem{defn}{Definition}

\newtheorem{exampl}{Example}
\theoremstyle{remark}

\DeclareFontFamily{U}{mathx}{\hyphenchar\font45}
\DeclareFontShape{U}{mathx}{m}{n}{
<5> <6> <7> <8> <9> <10>
<10.95> <12> <14.4> <17.28> <20.74> <24.88>
mathx10
}{}
\DeclareSymbolFont{mathx}{U}{mathx}{m}{n}
\DeclareFontSubstitution{U}{mathx}{m}{n}
\DeclareMathAccent{\widecheck}{0}{mathx}{"71}

\title{A new class of function spaces generalizing the Arias-de-Reyna space}
\author{Jan~Moldav\v{c}uk}
\address{Department of Mathematical Analysis, Faculty of Mathematics and Physics, \newline Charles University, Sokolovsk\'a 83, 186 75 Praha 8, Czech Republic}
\email{jan.moldavcuk486@student.cuni.cz}
\date{\today}
\subjclass{46E30}
\keywords{Rearrangement-invariant quasi-Banach space, Lorentz space, Banach envelope, fundamental function}
\thanks{This research was supported by the grants 23-04720S and 26-21107S of the Czech Science Foundation.}

\begin{document}
\tdplotsetmaincoords{70}{110}

\begin{abstract}
    This paper studies the structure and properties of a rearrangement-invariant quasi-Banach space $QA_{\varphi,\psi}$ which generalizes the classical space $QA$ introduced by Arias-de-Reyna in connection with the study of the pointwise almost everywhere convergence of Fourier series. We present basic properties of $QA_{\varphi,\psi}$ and explore the relationship between $QA_{\varphi,\psi}$ and other rearrangement-invariant Banach spaces.
\end{abstract}

\maketitle

\section{Introduction}
One of the most fundamental problems in Fourier analysis is characterizing the space of all integrable functions on $[0,1]$ with respect to the Lebesgue measure $\lambda$, for which the Fourier series converges almost everywhere. Although this question has attracted attention for more than a century, it still remains unsolved to this day.

Kolmogorov's classical counterexample \cite{Kolmogoroff1923} states that there exists a function in $L^1$ with almost everywhere divergent Fourier series. On the other hand, Hunt \cite{MR238019} showed that for functions in $L^p$ with $p>1$, the Fourier series converges almost everywhere. This result has motivated mathematicians to search for function spaces between $L^1$ and $L^p$, where the convergence almost everywhere holds.

Throughout the years, there have been many results related to this problem. We refer to some of them \cite{MR1407066, MR1019791, MR241885}. In 1996, Antonov \cite{MR1407066} brought an important result that extended   many of these results, which states that in the space $L\log L\log\log\log L$ almost everywhere convergence holds.

Few years later, in 2002, Arias-de-Reyna \cite{MR1875141} defined a logconvex rearrangement-invariant quasi-Banach space $QA$, such that
\[
L\log L\log\log\log L\subset QA
\]
with the almost everywhere convergence property. This result gives a new and until now the best candidate for the biggest rearrangement-invariant space in which almost everywhere convergence holds. The deeper structure of this space was studied in the paper of Carro, Mastyło, and Rodríguez-Piazza \cite{MR2898729}.

We now give the definition of the space $QA$:
\begin{defn}
    We say that a measurable function $f$ on $[0,1]$ is in $QA$ if
    \begin{multline*}
    \|f\|_{QA}=\inf\left\{\sum_{n=1}^\infty (1+\log n)\|f_n\|_1\log\left(\frac{e\|f_n\|_\infty}{\|f_n\|_1}\right)\,:\right.\\
    \left.|f|\leq\displaystyle\sum_{n=1}^{\infty} f_n\;\;\text{a.e.},\,\{{f_n}\}_{n=1}^\infty\subset L^\infty,\, f_n\geq 0\right\}
    \end{multline*}
    is finite.
\end{defn}

In this paper, we introduce a more general family of spaces, denoted $QA_{\varphi,\psi}$, which contains $QA$ as a special case. Our aim is to study the basic structure of these spaces and to understand how they are related to classical rearrangement-invariant spaces, extending the results obtained in \cite{MR2898729}.
\begin{defn}\label{def1}
    Let $\psi:[0,\infty)\to [0,\infty)$ and $\varphi:[0,1]\to[0,\infty)$ be concave, non-decreasing functions that vanish only at the origin.
    The space $QA_{\varphi,\psi}$ consists of all measurable functions $f$ on $[0,1]$ for which
    \begin{multline*}
        \|f\|_{\varphi,\psi}=\inf\left\{\displaystyle\sum_{n=1}^\infty{\psi(n)\|f_n\|_\infty\varphi\left(\frac{\|f_n\|_1}{\|f_n\|_\infty}\right)}\,:\right.\\
    \left.|f|\leq\displaystyle\sum_{n=1}^{\infty} f_n\;\;\text{a.e.},\,\{{f_n}\}_{n=1}^\infty\subset L^\infty,\, f_n\geq 0\right\}
    \end{multline*}
    is finite (where $\frac{0}{0} \coloneqq 0$ by convention).
\end{defn}
First, observe that taking $\varphi(t)=t\log(e/t)$, $t\in(0,1]$ and $\psi(n)=1+\log n$, $n\in\mathbb{N}$, we get $QA_{\varphi,\psi}=QA$, so the introduction of $QA_{\varphi,\psi}$ may be seen as a meaningful generalization.
Since we are considering a wide class of function spaces, we lose information about logconvexity, but the rearrangement-invariant quasi-Banach property remains. This observation, together with other basic properties of spaces $QA_{\varphi,\psi}$, is the content of our first main result, Theorem 1. 

\begin{thm}\label{vlastnosti}
    \begin{enumerate}[label=(\roman*)]
        \item \label{r.i.} $QA_{\varphi,\psi}$ is an r.i.\ quasi-Banach space.
        \item \label{krajni}$L^\infty\hookrightarrow QA_{\varphi,\psi}\hookrightarrow L^1$.
        \item\label{density} The set of simple functions is dense in $QA_{\varphi,\psi}$.
        \item \label{L^infty}$QA_{\varphi,\psi}=L^\infty$ if and only if $\varphi(0_+)\neq 0$.
        \item\label{L^1} $QA_{\varphi,\psi}=L^1$ if and only if $\varphi\asymp \mathrm{id}$.
    \end{enumerate}
\end{thm}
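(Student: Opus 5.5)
The plan is to reduce almost everything to one functional. For $0\le g\in L^\infty$, put $\Phi(g)=\|g\|_\infty\varphi(\|g\|_1/\|g\|_\infty)$, so that $\|f\|_{\varphi,\psi}=\inf\sum\psi(n)\Phi(f_n)$. I will use three elementary facts throughout, all consequences of the concavity of $\varphi$ and $\varphi(0)=0$.
\begin{enumerate}[label=(\alph*)]
\item Since $\varphi(t)/t$ is non-increasing, we have $\varphi(1)t\le\varphi(t)\le\varphi(1)$ on $(0,1]$. Hence $\varphi(1)\|g\|_1\le\Phi(g)\le\varphi(1)\|g\|_\infty$.
\item The map $s\mapsto s\varphi(a/s)$ is non-decreasing for $s\ge a$, because its derivative $\varphi(a/s)-(a/s)\varphi'(a/s)$ is $\ge 0$ by concavity. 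Together with the monotonicity of $\varphi$, this makes $\Phi(g)$ monotone in both $\|g\|_1$ and $\|g\|_\infty$.
\item Consequently $\Phi(\mathbb E[g\,|\,\mathcal A])\le\Phi(g)$ for every sub-$\sigma$-algebra $\mathcal A$, and $\Phi(g\chi_E)\le\Phi(g)$.
\end{enumerate}
On the $\psi$ side I use that $\psi$ is concave with $\psi(0)\ge0$, so $\psi(mn)\le m\psi(n)$ and $\psi(n)\ge\psi(1)>0$.

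\emph{(i) and (ii).} Homogeneity and the lattice property are immediate from the definition. For the quasi-triangle inequality, given near-optimal decompositions $(f_n)$ of $f$ and $(g_n)$ of $g$, I interleave them as $h_{2n-1}=f_n$, $h_{2n}=g_n$. Then $\psi(2n)\le2\psi(n)$ gives $\|f+g\|\le2(\|f\|+\|g\|)$.

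Both embeddings in (ii) come from (a):
\[
\|f\|_{\varphi,\psi}\le\psi(1)\varphi(1)\|f\|_\infty
\quad\text{(one-term decomposition)},\qquad
\|f\|_1\le\sum\|f_n\|_1\le\frac{1}{\psi(1)\varphi(1)}\sum\psi(n)\Phi(f_n).
\]
The second bound also shows that $\|f\|=0$ forces $f=0$ a.e.

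For completeness, I decompose each $g_k$ and re-index the pieces of $g_k$ along $m=2^k(2n-1)$. Since $\psi(2^k(2n-1))\le2^{k+1}\psi(n)$, this yields $\bigl\|\sum_k|g_k|\bigr\|\le\sum_k2^{k+1}\|g_k\|$. Applying this to a fast Cauchy subsequence, whose limit exists in $L^1$ and a.e.\ by (ii), gives completeness.

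Rearrangement invariance is the delicate part. By Ryff's theorem, $|f|=f^*\circ\sigma$ for a measure-preserving $\sigma$. Composing a decomposition of $f^*$ with $\sigma$ preserves $\Phi$, so $\|f\|\le\|f^*\|$. For the reverse inequality, given $|f|\le\sum f_n$, I take conditional expectations onto $\sigma^{-1}(\mathcal B)$. This gives $f^*\circ\sigma\le\sum\mathbb E[f_n|\sigma^{-1}\mathcal B]=\sum h_n\circ\sigma$, hence $f^*\le\sum h_n$. By (c), $\Phi(h_n)\le\Phi(f_n)$.

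\emph{(iii).} Take $f\ge0$ (split real and imaginary parts otherwise) and a decomposition with finite sum. Let $g=\min(f,\sum_{n\le N}f_n)\in L^\infty$. Then $f-g\le\sum_{n>N}f_n$. Shifting indices and using $\psi(n-N)\le\psi(n)$ bounds $\|f-g\|$ by the tail of the series, which tends to $0$. Finally $g$ is a uniform limit of simple functions, and the $L^\infty$ embedding from (ii) transfers this to the quasi-norm.

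\emph{(iv).} If $\varphi(0_+)=c>0$, then $\Phi(g)\ge c\|g\|_\infty$, so $\|f\|\ge c\psi(1)\sum\|f_n\|_\infty\ge c\psi(1)\|f\|_\infty$. If $\varphi(0_+)=0$, I choose $a_k\downarrow0$ with $\psi(1)2^k\varphi(a_k)\le 2^{-k}$. Then the unbounded function $\sum_k2^k\chi_{[0,a_k]}$ lies in $QA_{\varphi,\psi}$, by the countable estimate above with one-term decompositions.

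\emph{(v).} If $\varphi\asymp\mathrm{id}$, then $\Phi(g)\asymp\|g\|_1$. The lower bound is (ii). For the upper bound, given $f\in L^1$, I cut $|f|$ at levels $t_k\uparrow\infty$ chosen so that $\|f\chi_{\{|f|>t_{k-1}\}}\|_1\le2^{-k}/\psi(k)$, with $f_k=|f|\chi_{\{t_{k-1}<|f|\le t_k\}}$. Then $\sum\psi(k)\|f_k\|_1<\infty$.

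Conversely, suppose $\varphi\not\asymp\mathrm{id}$. Then $\varphi(t)/t\to\infty$ as $t\to0$. If $QA_{\varphi,\psi}=L^1$, the open mapping/closed graph theorem for F-spaces would give $\|\cdot\|_{\varphi,\psi}\le C\|\cdot\|_1$.

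The step I expect to be the main obstacle is a matching lower bound $\|\chi_E\|_{\varphi,\psi}\ge\psi(1)\varphi(|E|)$. My route: given $\chi_E\le\sum f_n$, replace $f_n$ by $f_n\chi_E$ and then by its conditional expectation onto $\{E,\text{trivial on }E^c\}$. This gives $c_n\chi_E$ with $\sum c_n\ge1$ and, by (c), $\Phi(c_n\chi_E)=c_n\varphi(|E|)\le\Phi(f_n)$. Applying this to $\chi_{[0,a]}/a$ yields
\[
\psi(1)\frac{\varphi(a)}{a}\le\|\chi_{[0,a]}/a\|_{\varphi,\psi}\le C\|\chi_{[0,a]}/a\|_1=C,
\]
which is impossible as $a\to0$.
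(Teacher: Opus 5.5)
Your argument is correct in substance, apart from one slip in (iv) noted below, but it takes a different route from the paper in several places.

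\textbf{Rearrangement invariance.} The paper uses Calder\'on's theorem: an operator $T$ of norm at most $1$ on $L^1$ and on $L^\infty$ with $Tf=g$, applied termwise to a signed decomposition $f=\sum f_n$. Your Ryff map plus conditional expectation onto $\sigma^{-1}(\mathcal B)$ is the same mechanism made explicit. Your fact (c) replaces the bounds $\|Tf_n\|_1\le\|f_n\|_1$ and $\|Tf_n\|_\infty\le\|f_n\|_\infty$.

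\textbf{The ($\Rightarrow$) directions of (iv) and (v).} Here the difference is larger. The paper first proves $QA_{\varphi,\psi}\hookrightarrow\Lambda_\varphi$, using the external inequality $\|f\|_{\Lambda_\varphi}\le\|f\|_\infty\varphi(\|f\|_1/\|f\|_\infty)$ from \cite{MR2898729}. It then identifies the Banach envelope of $QA_{\varphi,\psi}$ as $\Lambda_\varphi$ and compares fundamental functions. Your conditional-expectation argument gives directly $\|\chi_E\|_{\varphi,\psi}=\psi(1)\varphi(\lambda(E))$. This is exactly the paper's embedding into $\Lambda_\varphi$ restricted to characteristic functions, and it is all that is needed. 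Your route is more elementary and self-contained, and it gives the better quasi-triangle constant $2$ instead of $4$. The paper's route yields the envelope $\widehat{QA_{\varphi,\psi}}=\Lambda_\varphi$, which it reuses later, e.g.\ for the normability characterization.

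\textbf{The slip in (iv).} The estimate does not close as written. With $\psi(1)2^k\varphi(a_k)\le 2^{-k}$, your countable estimate only gives $\bigl\|\sum_k 2^k\chi_{[0,a_k]}\bigr\|_{\varphi,\psi}\le\sum_k 2^{k+1}2^{-k}=\infty$. There are three easy repairs:
\begin{enumerate}
\item Impose $\psi(1)2^k\varphi(a_k)\le 4^{-k}$ instead.
\item Use the pieces $2^k\chi_{[0,a_k]}$ themselves as the decomposition, indexed by $k$. Since $\psi(k)\le k\psi(1)$, the cost is at most $\sum_k k2^{-k}<\infty$.
\item Use your identity for $\|\chi_E\|_{\varphi,\psi}$, as the paper does. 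If $QA_{\varphi,\psi}=L^\infty$ with embedding constant $\kappa$, then $\psi(1)\varphi(t)=\|\chi_{[0,t]}\|_{\varphi,\psi}\ge\kappa\|\chi_{[0,t]}\|_\infty=\kappa>0$ for all $t\in(0,1]$.
\end{enumerate}

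\textbf{Smaller points.}
\begin{enumerate}
\item In (v)($\Leftarrow$), the tail condition should start at $k=2$, with $t_0=0$.
\item The cut in (v)($\Leftarrow$) only proves $L^1\subset QA_{\varphi,\psi}$ as sets. To get the norm bound, either invoke the open mapping theorem as you do for the converse, or use tails $\varepsilon 2^{-k}/\psi(k)$. When $\varphi(t)\le Mt$, the latter gives $\|f\|_{\varphi,\psi}\le M\psi(1)\|f\|_1$ directly.
\item In (iii) the functions are real-valued, so you mean positive and negative parts rather than real and imaginary parts.
\end{enumerate}
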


The results \ref{r.i.} and \ref{density} were established in \cite{MR2898729} for the special case of the space $QA$. Here we show that they also hold in our general setting. We also prove the characterization of extreme cases in \ref{krajni} of the space $QA_{\varphi,\psi}$, namely \ref{L^infty} and \ref{L^1}.

To obtain deeper results concerning the structure of $QA_{\varphi,\psi}$ and its relation to Lorentz spaces, especially optimal embedding, we need to restrict ourselves to requirements for the growth of functions $\varphi$ and $\psi$.
 In the following, we assume that the cases \ref{L^infty} and \ref{L^1} in Theorem~\ref{vlastnosti} do not occur. We shall suppose that there exist constants $c\in(0,1)$, $C\in[1,\infty)$, and $p\in(0,1]$ such that
\begin{equation}\label{predpoklad}
    \frac{\varphi(t)}{t^c}\ \text{is non-decreasing on $(0,p]$},
\end{equation}
and for $n\in \mathbb{N}$,
\begin{equation}\label{predpoklad2}
    \psi(n^2)\leq C\psi(n).
\end{equation}
Under these assumptions, we introduce the function $\tau$, which plays a central role in characterizing the embeddings of Lorentz spaces into $QA_{\varphi,\psi}$. We define

\begin{equation}\label{tau}
    \tau(t)=\begin{cases}
            \varphi(t)\psi\left(\overline{\log}\,\left(\frac{\varphi(t)}{t}\right)\right)& \quad \text{if }\,t\in (0,1],\\
            0 & \quad \text{if }\,t=0,
        \end{cases}
\end{equation}
where $\overline{\log}\,t=1+\log^+t$, $t>0$. Now we are ready to state our second main result.

\begin{thm}\label{trida neobs}
    \begin{enumerate}[label=(\roman*)]
        \item \label{trida neobss} Let X be an r.i.\ Banach space with the fundamental function $\varphi_X$ such that 
        \[
        \displaystyle\lim_{t\to 0_+}\frac{\varphi_X(t)}{\tau(t)}=0,
        \]
        then $QA_{\varphi,\psi}$ does not contain $X$.
        \item \label{vnor} If there exists $t_0\leq1$ such that $\tau$ is a non-decreasing function on $[0,t_0]$, then
        \[
        \Lambda_{\widetilde{\tau}}\hookrightarrow QA_{\varphi,\psi},
        \]
        where $\widetilde{\tau}$ is concave function equivalent with the function $\tau$.
        \item \label{char} $QA_{\varphi,\psi}=\Lambda_\varphi$ if and only if $\psi\asymp1$ on $[1,\infty)$.
    \end{enumerate}
\end{thm}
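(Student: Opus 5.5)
The plan is to handle the three parts together, since (iii) follows from (i) and from the upper estimate behind (ii). Throughout I use the assumptions \eqref{predpoklad} and \eqref{predpoklad2}. I also use the facts that $\varphi(0_+)=0$ and $\varphi(t)/t\to\infty$ as $t\to0_+$. These hold because we excluded the cases \ref{L^infty} and \ref{L^1} of Theorem~\ref{vlastnosti}.

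\emph{Part \ref{vnor}.} I will use a level-set decomposition. For $f\ge0$ set $E_k=\{f^*>2^k\}$, so that $f\le\sum_k 2^{k+1}\chi_{E_k}$ after rearrangement, and $\|f\|_{\Lambda_{\widetilde\tau}}\asymp\sum_k 2^k\widetilde\tau(|E_k|)$. The task is to enumerate the pieces $2^{k+1}\chi_{E_k}$ as $f_n$ with a good weight $\psi(n)$. Each piece costs $2^{k+1}\varphi(|E_k|)$.

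\begin{enumerate}
\item Group the pieces by the scale of $|E_k|$: put $E_k$ into block $m$ when $|E_k|\in(2^{-2^{m+1}},2^{-2^m}]$.
\item Merge all pieces inside one block into a single function $f_m$, using concavity of $\varphi$. Since the $E_k$ are nested and the levels grow geometrically, I expect $\|f_m\|_\infty\varphi(\|f_m\|_1/\|f_m\|_\infty)$ to be comparable to $\sum_{k\in\text{block }m}2^k\varphi(|E_k|)$. This needs checking, but it is the step where \eqref{predpoklad} enters: $\varphi(t)/t^c$ non-decreasing controls how $\varphi$ scales.
\item For $t$ in block $m$, \eqref{predpoklad} gives $\varphi(t)/t\gtrsim t^{c-1}$, hence $\overline{\log}(\varphi(t)/t)\gtrsim 2^m$. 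Iterating \eqref{predpoklad2} then gives $\psi(m)\lesssim\psi(2^m)\lesssim\psi(\overline{\log}(\varphi(t)/t))$.
\item Therefore $\psi(m)\|f_m\|_\infty\varphi(\cdot)\lesssim\sum_k 2^k\tau(|E_k|)$, and monotonicity of $\tau$ near $0$ makes $\tau$ comparable to $\widetilde\tau$. Large sets with $|E_k|>t_0$ contribute a bounded number of terms and are absorbed using part \ref{krajni} of Theorem~\ref{vlastnosti}.
\end{enumerate}

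\emph{Part \ref{trida neobss}.} If $X\subset QA_{\varphi,\psi}$, the inclusion is bounded by the closed graph theorem, since both spaces are continuously embedded in $L^1$. I will contradict this with a family of test functions $g_t$, $t\to0_+$. Take $N\approx\overline{\log}(\varphi(t)/t)$ and let $g_t$ be a staircase $\sum_{j=1}^N c_j\chi_{[0,s_j)}$. The layers are chosen so that each has $\varphi$-cost $\approx\varphi(t)$, and the total scale is arranged so that $\|g_t\|_X\lesssim\varphi_X(t)$, using the triangle inequality in the Banach space $X$.

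The key lemma, and the main obstacle, is the lower bound $\|g_t\|_{\varphi,\psi}\gtrsim\tau(t)$. The idea is that any majorant $\sum f_n$ needs roughly $N$ essentially different pieces. A single bounded piece $f_n$ is inefficient on many layers at once, because $\|f_n\|_\infty\varphi(\|f_n\|_1/\|f_n\|_\infty)$ penalizes a mismatch between height and support. Hence a fixed fraction of the mass must come from indices $n\gtrsim N^{1/C'}$, and \eqref{predpoklad2} converts $\psi(N^{1/C'})$ into $\gtrsim\psi(N)$. I would first try to prove this by splitting each $f_n$ according to whether $\|f_n\|_\infty$ is comparable to a given layer height, following the Carro--Masty{\l}o--Rodr\'iguez-Piazza argument for $QA$.

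\emph{Part \ref{char}.} Suppose $\psi\asymp1$. For $g\ge0$ bounded, concavity of $\varphi$ gives $\int_0^1 g^*\,d\varphi\le\|g\|_\infty\varphi(\|g\|_1/\|g\|_\infty)$, because the extremal rearrangement is $\|g\|_\infty\chi_{[0,a)}$. Summing over a decomposition yields $\|f\|_{\Lambda_\varphi}\lesssim\|f\|_{\varphi,\psi}$. Conversely, the atomic decomposition of $\Lambda_\varphi$, $f\le\sum a_k\chi_{E_k}$ with $\sum a_k\varphi(|E_k|)\lesssim\|f\|_{\Lambda_\varphi}$, gives $\|f\|_{\varphi,\psi}\lesssim\|f\|_{\Lambda_\varphi}$.

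Now suppose $\psi$ is unbounded. Since $\varphi(t)/t\to\infty$, we have $\tau(t)/\varphi(t)=\psi(\overline{\log}(\varphi(t)/t))\to\infty$. Applying part \ref{trida neobss} with $X=\Lambda_\varphi$, whose fundamental function is $\varphi$, shows $\Lambda_\varphi\not\subset QA_{\varphi,\psi}$.
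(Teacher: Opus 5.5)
\textbf{Part (ii).} Your blocks $\lambda(E_k)\in(2^{-2^{m+1}},2^{-2^m}]$ are calibrated to $QA$, and steps 2 and 3 both fail for general $\varphi$. Write $\gamma(t)=\varphi(t)/t$.

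\emph{Step 3.} Assumption \eqref{predpoklad} gives $\varphi(t)\le\varphi(p)p^{-c}t^{c}$ on $(0,p]$. This is an \emph{upper} bound $\gamma(t)\lesssim t^{c-1}$, not a lower one. Already for $QA$ one has $\overline{\log}\gamma(t)\approx m$, not $2^m$, on block $m$. Take $\varphi(t)=t\log\log(e^e/t)$, which satisfies \eqref{predpoklad} with $c=\tfrac12$, and $\psi=\overline{\log}$ on $[1,\infty)$. Then $\overline{\log}\gamma(t)\approx\log m$ on block $m$, so the weight $\psi(m)\approx\log m$ is not $\lesssim\psi(\overline{\log}\gamma(t))\approx\log\log m$.

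\emph{Step 2.} The merging fails when $\varphi$ is far from linear. Take $\varphi(t)=t^{1/2}$ and $f(x)=x^{-1/3}$, so $\lambda(E_k)=8^{-k}$ for $k\ge0$ and $f\in\Lambda_{\widetilde{\tau}}$. Block $m$ then contains about $2^m/3$ levels. The merged function satisfies $\|f_m\|_\infty\varphi(\|f_m\|_1/\|f_m\|_\infty)=(\|f_m\|_\infty\|f_m\|_1)^{1/2}\asymp 2^{k_{\max}/2-k_{\min}}\asymp 1$. The levels of the block contribute only $\sum_k 2^{-k/2}\approx 2^{-2^m/6}$, so your decomposition has infinite cost.

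\emph{Missing idea.} Group layers by the value of $\gamma$, not by doubly dyadic scales of $t$. Then merge them through the subadditive majorant $\|g\|_\infty\varphi(\|g\|_1/\|g\|_\infty)\le\max\{s\|g\|_\infty,\|g\|_1\}\varphi(s)/s$ from the proof of Proposition~\ref{prop o vnoreni 0}. With $s_n=\gamma^{-1}(e^{n-1})$, put all layers $a\chi_E$ with $\lambda(E)\in(s_{n+1},s_n]$ into one function at index $n+1$, using $s=s_{n+1}$. Each layer then costs at most $e\,a\,\varphi(\lambda(E))\psi(n+1)\lesssim a\,\tau(\lambda(E))$. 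This holds because $\gamma$ varies by at most a factor $e$ on such a block and $n+1\le 2\,\overline{\log}\gamma(\lambda(E))$. This is exactly what Proposition~\ref{prop o vnoreni} with $s(x)=\gamma^{-1}(e^{x-1})$ does.

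\textbf{Part (i).} This part is not proved. The lower bound you call the main obstacle is its entire content, and the test function is not pinned down.

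As written, the test function cannot work: $N\approx\overline{\log}\gamma(t)$ nested layers, each of $\varphi$-cost $\approx\varphi(t)$. Then $\|g_t\|_{\Lambda_\varphi}\approx N\varphi(t)\gtrsim\tau(t)$, so the lower bound is trivial. But $\|g_t\|_X\lesssim\varphi_X(t)$ fails for $X=\Lambda_\varphi$, which is exactly the case you need in (iii).

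The paper's construction (Proposition~\ref{thm2}) is as follows.
\begin{itemize}
\item It takes $2N$ \emph{disjoint} sets $A_j$ with $\lambda(A_j)=\mu_j$, $\gamma(\mu_{j+1})=(N^3\psi(N)/\psi(1))^{1/c}\gamma(\mu_j)$, and heights $a_j=1/(2N\varphi(\mu_j))$.
\item For any decomposition, either $\|\sum_{n\ge N}f_n\|_{\Lambda_\varphi}\ge(2^c-1)/8$, which already costs $\gtrsim\psi(N)$.
\item Or, by a lower estimate of $\Lambda_\varphi$-norms based on \eqref{predpoklad}, this tail exceeds $a_j/2$ on half of $A_j$ for fewer than $N$ indices $j$.
\item In the second case, pigeonholing over the $N-1$ indices $l<N$ gives one $f_l$ that is large on substantial parts of two sets $A_j,A_k$.
\item The huge gap between $\gamma(\mu_j)$ and $\gamma(\mu_k)$ then forces $\psi(1)\|f_l\|_\infty\varphi(\|f_l\|_1/\|f_l\|_\infty)\ge\frac{2^c-1}{8}\psi(N)$.
\end{itemize}
Note that with $N\approx\overline{\log}\gamma(t)$ layers there is no room for such separations. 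The paper uses about $\overline{\log}\gamma/\log\overline{\log}\gamma$ layers. Since $\overline{\log}\gamma(\mu_j)\lesssim N\log N\le N^2$, assumption \eqref{predpoklad2} gives $\psi(\overline{\log}\gamma(\mu_j))\le C\psi(N)$.

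On the $X$ side, $\|f\|_X\le\sup_j\varphi_X(\mu_j)/\varphi(\mu_j)$. So the hypothesis $\varphi_X=o(\tau)$ is applied at each $\mu_j$ separately, and no monotonicity of $\varphi_X/\varphi$ is needed. Your normalization $\|g_t\|_X\lesssim\varphi_X(t)$ tacitly requires such monotonicity.

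\textbf{Part (iii).} The direction ($\Rightarrow$) rests on (i), so it inherits this gap. Your proof of ($\Leftarrow$) is correct: it uses $|f|\le\sum_k 2^{k+1}\chi_{\{|f|>2^k\}}$ and boundedness of $\psi$. It is more direct than the paper's route through Propositions~\ref{prop o vnoreni 0} and~\ref{prop o vnoreni} with $s(x)=1/x$.
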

This theorem provides the optimality of the Lorentz space generated by $\tau$ in the embedding into $QA_{\varphi,\psi}$ in the sense of the fundamental function. In particular, the result of Theorem \ref{trida neobs} gives a large class of rearrangement-invariant Banach spaces that are not contained in $QA_{\varphi,\psi}$. The special case of parts \ref{trida neobss} and \ref{vnor} of Theorem \ref{trida neobs} for $QA$ was established in \cite{MR2898729}, leading to the main result that the space $L\log L\log\log\log L$ is, in the sense of fundamental functions, the largest r.i.\ Banach space contained in $QA$.
As a consequence of the result \ref{char}, we obtain that $\psi\asymp1$ on $[1,\infty)$ is both a necessary and a sufficient condition for the normability of the space $QA_{\varphi,\psi}$. In addition, in Section \ref{section 4}, we show that the implication from right to left in the result \ref{char} is valid even without the additional assumptions (\ref{predpoklad}) and (\ref{predpoklad2}).

The paper is organized as follows. In Section 2, we recall some basic definitions and properties from the theory of Banach function spaces. In Section 3, we study the inner structure of the space $QA_{\varphi,\psi}$. In this section, we present parts \ref{r.i.}, \ref{krajni}, \ref{density} and the implication ($\Leftarrow$) of \ref{L^infty} and \ref{L^1} from the proof of Theorem \ref{vlastnosti}. In Section 4, we observe that the Banach envelope of the space $QA_{\varphi,\psi}$ is the Lorentz space $\Lambda_\varphi$. As a consequence of this observation, we obtain the rest of the proof of Theorem \ref{vlastnosti}, that is, ($\Rightarrow$) in \ref{L^infty} and \ref{L^1}. We also prove that there exists a large class of Lorentz spaces embedded in $QA_{\varphi,\psi}$.
In Section 5, we present the proof of Theorem \ref{trida neobs}.
We also give examples of the space $QA_{\varphi,\psi}$ and its corresponding function $\tau$ for various choices of the functions $\varphi$, $\psi$.

\section{Preliminaries}
In this chapter, we recall some basic definitions and results from the theory of Banach function spaces. Our main references are Bennett, Sharpley \cite[Chapters~\Rom{1},\Rom{2}]{MR928802} and \cite{MR2898729}.

Throughout this paper, all function spaces are considered over the interval $[0,1]$. We denote by $\lambda$ the Lebesgue measure on $[0,1]$, and by $\|\cdot\|_p$ the norm on $L^p= L^p([0,1],\lambda)$, for $1\leq p\leq \infty$. All equalities and inequalities between measurable functions are understood to hold almost everywhere (a.e.). If $f$ and $g$ are real-valued functions, then the symbol $f\asymp g$ means that there exist constants $C, c > 0$ such that $Cg\leq f\leq cg$, that is, $f$ and $g$ are equivalent.

Let $X$ be a real vector space. A functional $\|\cdot\|_X:X\to[0,\infty)$ is called a quasi-norm if it satisfies the following conditions:
\begin{enumerate}[label=(\arabic*)]
    \item $\|x\|_X=0$ if and only if $x=0$,
    \item $\|ax\|_X=|a|\|x\|_X$ for $a\in\mathbb{R}$, $x\in X$,
    \item there exists a constant $C\geq1$ such that $\|x+y\|_X\leq C(\|x\|_X+\|y\|_X)$ for $x,y\in X$.
\end{enumerate}
If condition (3) holds with $C = 1$, then $\|\cdot\|_X$ is called a norm. The couple $(X,\|\cdot\|_X)$ is called a (quasi-) normed space. If this space is complete, it is called a (quasi-) Banach space.

A (quasi-) Banach lattice $X$ (on $([0,1],\lambda)$) is a (quasi-) Banach space $X$ which is a subspace of measurable functions and satisfies:
\begin{enumerate}[label=(\arabic*)]
    \item there exists $u\in X$ with $u>0$ a.e.,
    \item if $|f|\leq|g|$ a.e., where $g\in X$ and $f$ is a measurable
    function, then $f\in X$ and $\|f\|_X\leq\|g\|_X$.
\end{enumerate}

Let $f$ be a measurable function. The distribution function $\lambda_f$ of the function $f$ is given by 
\[
\lambda_f(s)=\lambda\{x\in[0,1]:|f(x)|>s\},\qquad s\in[0,\infty).
\]
The decreasing rearrangement of $f$ is the function $f^\ast$ defined by 
\[
f^\ast(t)=\inf\{s\in[0,\infty):\lambda_f(s)\leq t\},\qquad t\in[0,\infty).
\]

A (quasi-) Banach lattice $X$ is called a rearrangement-invariant~(r.i.) (quasi-) Banach space $X$, if for all $f\in X$ and $g$ a measurable function such that $\lambda_f=\lambda_g$, it holds $g\in X$ and $\|f\|_X=\|g\|_X$.

The fundamental function of an r.i.\ (quasi-) Banach space is defined by $\varphi_X(t) =\|\chi_A\|_X$, where $A$ is a measurable set with $\lambda(A)=t$, for $t\in[0,1]$.

A non-decreasing function $\varphi:[0,1]\to[0,\infty)$ is called quasi-concave if $\varphi(0)=0$ and $\varphi(t)>0$ for all $t\in(0,1]$ and $\frac{\varphi(t)}{t}$ is non-increasing on $(0,1).$

We note that a quasi-concave function $\varphi$ is equivalent to its least concave majorant $\widetilde{\varphi}$, more precisely, $\widetilde{\varphi}\leq 2\varphi\leq 2\widetilde{\varphi}$ on $[0,1]$. If $\phi:[0,1]\to[0,\infty)$ is not quasi-concave but is equivalent to some concave, non-decreasing function that vanishes only at the origin, then we denote this function by $\widetilde{\phi}$.

Let $\varphi:[0,1]\to[0,\infty)$ be a concave, non-decreasing function that vanishes only at the origin. The Lorentz space $\Lambda_\varphi$ consists of all measurable functions $f$ such that 
\[
\|f\|_{\Lambda_\varphi}=\int_0^1 f^\ast(s)\mathrm{d}\varphi(s)<\infty.
\]

Since $\varphi$ is non-negative, non-decreasing and concave, then the Riemann-Stieltjes integral in the above definition can be rewritten in the form
\[
\|f\|_{\Lambda_\varphi}=\|f\|_\infty\varphi(0_+)+\int_0^1 f^\ast(s)\varphi'(s)\mathrm{d}s.
\]
The Lorentz space is an r.i.\ Banach space with a fundamental function equal to $\varphi$.

Let $X$ and $Y$ be (quasi-) normed spaces. We say that $X$ is embedded into $Y$, if $X\subset Y$ and the natural inclusion map of $X$ in $Y$ is continuous, that is, there exists a constant $C > 0$ such that
\[
\|x\|_Y\leq C\|x\|_X\,,\qquad x\in X.
\]
We write $X\hookrightarrow Y$. If both $X\hookrightarrow Y$ and $Y\hookrightarrow X$ hold, we write $X=Y$.

We note that if $\varphi:[0,1]\to[0,\infty)$ is a concave, non-decreasing function that vanishes only at the origin, then
\[
L^\infty\hookrightarrow\Lambda_\varphi\hookrightarrow L^1.
\]

We also mention that every simple function $s=\sum_{j=1}^n a_j\chi_{A_j}$ with $A_j$ pairwise disjoint sets and $a_1>a_2>\cdots>a_n>0$, can be rewritten in the form 
\begin{equation}\label{jedoducha}
    s=\sum_{k=1}^n b_k\chi_{B_k},
\end{equation}
where $b_k$ are positive and $B_1\subset B_2\subset\cdots\subset B_n$, more precisely, $b_k=a_k-a_{k+1}$ with $a_{n+1}=0$ and $B_k=\bigcup_{j=1}^k A_j$. Then
\[
\|s\|_{\Lambda_\varphi}=\sum_{k=1}^n b_k\varphi(\lambda(B_k)).
\]

\section{Basic Properties}
Unless stated otherwise, we will assume that the functions $\varphi$ and $\psi$ satisfy the conditions given in Definition \ref{def1}. 

In \cite{MR1875141} and \cite{MR2898729}, different definitions of the space $QA$ were used. The following lemma shows an equivalent definition of the space $QA_{\varphi,\psi}$, and as a consequence, we see that the two definitions of the space $QA$ used in the literature are equivalent. Throughout this paper, we will use both definitions of the space $QA_{\varphi,\psi}$ interchangeably, without mention.

\begin{lemma}
    Let $f$ be a measurable function. Then
    \begin{equation*}
        \|f\|_{\varphi,\psi}=\inf\left\{\displaystyle\sum_{n=1}^\infty{\psi(n)\|f_n\|_\infty\varphi\left(\frac{\|f_n\|_1}{\|f_n\|_\infty}\right)}\,:f=\displaystyle\sum_{n=1}^{\infty} f_n\;\;\text{a.e.},\{{f_n}\}_{n=1}^\infty\subset L^\infty\right\}.
    \end{equation*} 
\end{lemma}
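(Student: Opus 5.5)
The two infima are compared directly. Call the infimum in the statement $N(f)$; the goal is to show $\|f\|_{\varphi,\psi}\le N(f)$ and $N(f)\le\|f\|_{\varphi,\psi}$. The key observation is that each summand depends on $f_n$ only through $\|f_n\|_\infty$ and $\|f_n\|_1$. I will also use the monotonicity of
\[
h(a,b)=a\,\varphi(b/a),\qquad 0\le b\le a,
\]
with $h(0,0)=0$, in both variables. It is non-decreasing in $b$ because $\varphi$ is non-decreasing. It is non-decreasing in $a$ because, for $b>0$, $h(a,b)=b\,\frac{\varphi(b/a)}{b/a}$, and $\varphi(t)/t$ is non-increasing, a consequence of concavity and $\varphi(0)=0$. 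Degenerate cases with $b=0$ or $a=0$ are checked by hand using the convention $0/0=0$.

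\emph{Step 1: $\|f\|_{\varphi,\psi}\le N(f)$.} Take any admissible decomposition $f=\sum f_n$ a.e. with $f_n\in L^\infty$. Then $|f|\le\sum|f_n|$ a.e., the functions $|f_n|\ge 0$ lie in $L^\infty$, and $\||f_n|\|_p=\|f_n\|_p$ for $p=1,\infty$. Hence the sequence $\{|f_n|\}$ is admissible for $\|f\|_{\varphi,\psi}$ with exactly the same sum. Taking the infimum gives the inequality.

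\emph{Step 2: $N(f)\le\|f\|_{\varphi,\psi}$.} We may assume $\|f\|_{\varphi,\psi}<\infty$. Fix nonnegative $f_n\in L^\infty$ with $|f|\le S:=\sum_n f_n$ a.e. and finite sum $\Sigma$.

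First we check that $S<\infty$ a.e. By concavity, $\varphi(t)\ge t\varphi(1)$ on $[0,1]$, so $\|f_n\|_\infty\varphi(\|f_n\|_1/\|f_n\|_\infty)\ge\varphi(1)\|f_n\|_1$. Moreover $\psi(n)\ge\psi(1)>0$. Hence $\sum_n\|f_n\|_1\le\Sigma/(\varphi(1)\psi(1))<\infty$, so $S\in L^1$, and in particular $S$ and $f$ are finite a.e.

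Now define $g_n=f\,f_n/S$ on $\{S>0\}$ and $g_n=0$ elsewhere. Note that $f=0$ a.e. on $\{S=0\}$. Then $|g_n|\le f_n$, so $g_n\in L^\infty$ with $\|g_n\|_\infty\le\|f_n\|_\infty$ and $\|g_n\|_1\le\|f_n\|_1$. Also, a.e. on $\{0<S<\infty\}$ the series $\sum_n g_n=\frac{f}{S}\sum_n f_n$ converges to $f$. By the two-variable monotonicity of $h$ and $\psi(n)\ge0$, each summand for $g_n$ is at most the corresponding summand for $f_n$. Therefore $N(f)\le\Sigma$, and taking the infimum finishes the proof.

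The main obstacle is only technical: justifying the monotonicity of $h$ in $a$, including the edge cases where $\|g_n\|_1=0$ or $\|g_n\|_\infty=0$, and ensuring $S<\infty$ a.e. so that the normalisation $f_n/S$ makes sense. Both are handled by concavity of $\varphi$ together with $\varphi(0)=0$.
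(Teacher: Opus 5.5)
Your proof is correct and follows the paper's argument. Step 1 is identical. In Step 2, your normalization $g_n = f f_n/S$ does in one move what the paper does in two: first it shrinks the $f_n$ so that $|f|=\sum_n f_n$, justified by the monotonicity inequality \eqref{usporadani}, and then it attaches the sign of $f$ via $f_n|_{\{f\ge0\}}-f_n|_{\{f<0\}}$. Your version makes explicit the construction behind the paper's ``we can assume $|f|=\sum_n f_n$'', and it also checks that $S<\infty$ a.e., both of which the paper leaves implicit.
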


\begin{proof}
    Let us denote 
    \begin{equation*}
        A_f=\inf\left\{\displaystyle\sum_{n=1}^\infty{\psi(n)\|f_n\|_\infty\varphi\left(\frac{\|f_n\|_1}{\|f_n\|_\infty}\right)}\,:f=\displaystyle\sum_{n=1}^{\infty} f_n\;\;\text{a.e.},\{{f_n}\}_{n=1}^\infty\subset L^\infty\right\}.
    \end{equation*}
    We prove $\|f\|_{\varphi,\psi}=A_f$ by showing both $\|f\|_{\varphi,\psi}\leq A_f$ and $A_f\leq \|f\|_{\varphi,\psi}$. 
    Let us start with $\|f\|_{\varphi,\psi}\leq A_f$. If $A_f=\infty$, there is nothing to prove. Let $f$ be a measurable function for which $A_f<\infty$. Let $\varepsilon>0$, we choose $\{f_n\}_{n=1}^\infty\subset L^\infty$ such that $f=\sum_{n=1}^\infty f_n$ and
    \[
    A_f\leq \displaystyle\sum_{n=1}^\infty{\psi(n)\|f_n\|_\infty\varphi\left(\frac{\|f_n\|_1}{\|f_n\|_\infty}\right)}<A_f+\varepsilon.
    \]
    Let $g_n=|f_n|$, then $|f|=\left|\sum_{n=1}^\infty f_n\right|\leq \sum_{n=1}^\infty g_n$, $g_n\geq 0$ and 
    \[
        \|f\|_{\varphi,\psi}\leq \displaystyle\sum_{n=1}^\infty \psi(n)\|g_n\|_\infty\varphi\left(\frac{\|g_n\|_1}{\|g_n\|_\infty}\right)=\displaystyle\sum_{n=1}^\infty{\psi(n)\|f_n\|_\infty\varphi\left(\frac{\|f_n\|_1}{\|f_n\|_\infty}\right)}< A_f+\varepsilon.
    \]
    By letting $\varepsilon \to 0$, we obtain the inequality.

    On the other hand, we shall prove $A_f\leq\|f\|_{\varphi,\psi}$. We can assume $f\in QA_{\varphi,\psi}$, since otherwise there is nothing to prove. Given $\varepsilon>0$, we choose $\{f_n\}_{n=1}^\infty\subset L^\infty,\,f_n\geq0$ such that $|f|\leq\sum_{n=1}^\infty f_n$ and 
    \[
    \|f\|_{\varphi,\psi}\leq\displaystyle\sum_{n=1}^\infty \psi(n)\|f_n\|_\infty\varphi\left(\frac{\|f_n\|_1}{\|f_n\|_\infty}\right)< \|f\|_{\varphi,\psi} +\varepsilon.
    \]
    From the concavity of $\varphi$, it follows that $\frac{\varphi(t)}{t}$ is non-increasing on $(0,1]$, as a consequence of it, $x\varphi\left(\frac{a}{x}\right)$ is non-decreasing on $[a,\infty)$. 
    We can assume that $|f|=\sum_{n=1}^\infty f_n$, because for given $g,h\in L^\infty,\,0\leq g\leq h$, we have that $x\varphi(\frac{\|g\|_1}{x})$ is non-decreasing on $[\|g\|_1,\infty)$, and therefore
    \begin{equation}\label{usporadani}
        \|g\|_{\infty}\varphi\left(\frac{\|g\|_1}{\|g\|_{\infty}}\right)\leq 
        \|h\|_{\infty}\varphi\left(\frac{\|g\|_1}{\|h\|_{\infty}}\right)\leq
        \|h\|_{\infty}\varphi\left(\frac{\|h\|_1}{\|h\|_{\infty}}\right).
    \end{equation}
    Then
    \[
    |f|=\displaystyle\sum_{n=1}^\infty f_n=\displaystyle\sum_{n=1}^\infty (f_n|_{\{f\geq0\}}+f_n|_{\{f<0\}})=\displaystyle\sum_{n=1}^\infty f_n|_{\{f\geq0\}}\,+\,\displaystyle\sum_{n=1}^\infty f_n|_{\{f<0\}}=f^++f^-.
    \]
    Therefore
    \[
    f=f^+-f^-=\displaystyle\sum_{n=1}^\infty f_n|_{\{f\geq0\}}\,-\,\displaystyle\sum_{n=1}^\infty f_n|_{\{f<0\}}=\displaystyle\sum_{n=1}^\infty (f_n|_{\{f\geq0\}}-f_n|_{\{f<0\}}).
    \]
    Let us define $g_n=(f_n|_{\{f\geq0\}}-f_n|_{\{f<0\}})$, then $g_n\in L^\infty,\,f=\sum_{n=1}^\infty g_n$ and $|g_n|=f_n$, because $f_n\geq0$. We obtain 
    \[
        A_f\leq \displaystyle\sum_{n=1}^\infty \psi(n)\|g_n\|_\infty\varphi\left(\frac{\|g_n\|_1}{\|g_n\|_\infty}\right)=\displaystyle\sum_{n=1}^\infty{\psi(n)\|f_n\|_\infty\varphi\left(\frac{\|f_n\|_1}{\|f_n\|_\infty}\right)}
        < \|f\|_{\varphi,\psi}+\varepsilon.
    \]
    Letting $\varepsilon\to 0$, we obtain the desired inequality. 
\end{proof}

\begin{prop}\label{banach}
    $QA_{\varphi,\psi}$ is a quasi-Banach lattice.
\end{prop}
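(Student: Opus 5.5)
We verify the quasi-norm axioms, the lattice property, and completeness, all directly from the definition. Throughout we write $N(g)=\|g\|_\infty\varphi(\|g\|_1/\|g\|_\infty)$ for $0\le g\in L^\infty$. Inequality (\ref{usporadani}) shows that $N$ is monotone: $0\le g\le h$ implies $N(g)\le N(h)$. Clearly also $N(ag)=aN(g)$ for $a\ge 0$.

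\textbf{Homogeneity and lattice property.} These are immediate from the definition. If $|f|\le|g|$, every admissible decomposition for $g$ is admissible for $f$, so $\|f\|_{\varphi,\psi}\le\|g\|_{\varphi,\psi}$. Homogeneity follows by scaling each $f_n$ by $|a|$.

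\textbf{Definiteness.} Suppose $|f|\le\sum f_n$. Since $\varphi(t)/t$ is non-increasing on $(0,1]$, we have $\varphi(t)\ge\varphi(1)t$, hence $N(f_n)\ge\varphi(1)\|f_n\|_1$. As $\psi(n)\ge\psi(1)>0$, this gives
\[
\|f\|_1\le\sum\|f_n\|_1\le\frac{1}{\varphi(1)\psi(1)}\sum\psi(n)N(f_n).
\]
Therefore $\|f\|_1\le C\|f\|_{\varphi,\psi}$, so $\|f\|_{\varphi,\psi}=0$ forces $f=0$ a.e. This also shows that $QA_{\varphi,\psi}\hookrightarrow L^1$. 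In the other direction, the decomposition $f_1=\|f\|_\infty$, $f_n=0$ for $n\ge2$ shows that bounded $f$ have finite norm and that $u\equiv1$ lies in the space.

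\textbf{Quasi-triangle inequality.} This is the main step. Take near-optimal decompositions $|f|\le\sum f_n$ and $|g|\le\sum g_n$, and interleave them: set $h_{2n-1}=f_n$ and $h_{2n}=g_n$. Then $|f+g|\le\sum h_k$, and the cost is
\[
\sum_n\psi(2n-1)N(f_n)+\psi(2n)N(g_n).
\]
To compare with the original costs we need $\psi(2n)\le 2\psi(n)$. This follows from concavity of $\psi$ together with $\psi(0)\ge0$: indeed $\psi(n)=\psi\bigl(\tfrac12\cdot 2n+\tfrac12\cdot 0\bigr)\ge\tfrac12\psi(2n)$. Since $\psi$ is monotone, also $\psi(2n-1)\le\psi(2n)\le2\psi(n)$. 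Hence
\[
\|f+g\|_{\varphi,\psi}\le2\bigl(\|f\|_{\varphi,\psi}+\|g\|_{\varphi,\psi}\bigr),
\]
so the quasi-norm constant is $2$.

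\textbf{Completeness.} By the Aoki--Rolewicz theorem it suffices to show the following: if $\sum_k 2^k\|u_k\|_{\varphi,\psi}<\infty$ (a suitably weighted version of absolute summability), then $\sum_k u_k$ converges in $QA_{\varphi,\psi}$. Alternatively, argue directly with a Cauchy sequence and a subsequence satisfying $\|u_{k+1}-u_k\|_{\varphi,\psi}\le 4^{-k}$.

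For each $k$, choose a decomposition $|u_k|\le\sum_n f_n^{(k)}$ with cost at most $2\|u_k\|_{\varphi,\psi}$. Enumerate all pairs $(k,n)$ along a diagonal-type injection $\iota:\mathbb{N}^2\to\mathbb{N}$ satisfying $\iota(k,n)\le 2^k n$, for instance by giving block $k$ the indices $\{2^{k-1}(2n-1)\}$. Then bound $\psi(2^k n)\le 2^k\psi(n)$ by iterating the inequality $\psi(2m)\le2\psi(m)$. The combined cost is therefore at most $\sum_k 2^{k+1}\|u_k\|_{\varphi,\psi}<\infty$.

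The $L^1$ embedding gives a.e. convergence of $\sum u_k$ to some $u$. Applying the same argument to the tails $\sum_{k\ge K}u_k$ shows that $\|u-\sum_{k<K}u_k\|_{\varphi,\psi}\to0$.

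\textbf{Main obstacle.} The delicate point is the index bookkeeping in the countable merge, together with the growth estimate $\psi(2^k n)\le2^k\psi(n)$ that makes the weighted series finite. Choosing the decay rate $4^{-k}$ (or any rate beating $2^k$) resolves it.
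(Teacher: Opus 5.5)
Your proof is correct and takes essentially the same route as the paper. Definiteness comes from the bound $\|f\|_1\le\frac{1}{\varphi(1)\psi(1)}\|f\|_{\varphi,\psi}$, and both the quasi-triangle inequality and completeness come from merging near-optimal decompositions along the pairing $2^{i-1}(2j-1)$, using $\psi(2m)\le 2\psi(m)$ together with a $4^{-k}$-subsequence. The differences are minor: by bounding the interleaved two-term case directly you get constant $2$ instead of the paper's $4$, and you make explicit the a.e.\ convergence of $\sum u_k$ and the tail estimate that the paper leaves implicit; the appeal to Aoki--Rolewicz is unnecessary, since your direct Cauchy-subsequence argument already suffices.
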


\begin{proof}
    It is clear that $\|f\|_{\varphi,\psi}\geq 0$ for $f\in QA_{\varphi,\psi}$, and if $f=0$, then $\|f\|_{\varphi,\psi}= 0$ by choosing $f_n\equiv0$ for all $n\in \mathbb{N}$. If $\|f\|_{\varphi,\psi}= 0$, then for given $\varepsilon>0$, we find $\{f_n\}_{n=1}^\infty\subset L^\infty,\,f_n\geq0$ such that $|f|\leq\sum_{n=1}^\infty f_n$ and 
    \[
    0\leq\displaystyle\sum_{n=1}^\infty \psi(n)\|f_n\|_\infty\varphi\left(\frac{\|f_n\|_1}{\|f_n\|_\infty}\right)<\varepsilon.
    \]
    Applying the Monotone Convergence Theorem and using the concavity of $\varphi$, we obtain
    \begin{align*}
        \frac{\varepsilon}{\varphi(1)\psi(1)}&> \frac{1}{\varphi(1)\psi(1)}\displaystyle\sum_{n=1}^\infty \psi(n)\|f_n\|_\infty\varphi\left(\frac{\|f_n\|_1}{\|f_n\|_\infty}\right)\geq \frac{1}{\varphi(1)}\displaystyle\sum_{n=1}^\infty \|f_n\|_\infty\varphi\left(\frac{\|f_n\|_1}{\|f_n\|_\infty}\right)
        \\
        &\geq\displaystyle\sum_{n=1}^\infty\|f_n\|_\infty\frac{\|f_n\|_1}{\|f_n\|_\infty}= \left\|\displaystyle\sum_{n=1}^\infty f_n\right\|_1\geq \|f\|_1\geq 0.
    \end{align*}
    Since $\varepsilon$ was arbitrary, we conclude $\|f\|_1=0$, which implies $f=0$.

    The equality $\|af\|_{\varphi,\psi}= |a|\|f\|_{\varphi,\psi}$ follows directly from the definition.
    
    Assume that $f=\sum_{i=1}^n f_i,\,f_i\in QA_{\varphi,\psi},\,n\in\mathbb{N}\cup\{\infty\}$. Given $\varepsilon>0$, for each $i\in\{1,\dots,n\}$ if $n\in\mathbb{N}$, or $i\in\mathbb{N}$ if $n=\infty$, we find $\{f_{i,j}\}_{j=1}^\infty\subset L^\infty$ such that $|f_i|\leq\sum_{j=1}^\infty f_{i,j},\,f_{i,j}\geq0$ and
    \[
    \|f_i\|_{\varphi,\psi}\leq \displaystyle\sum_{j=1}^\infty{\psi(j)\|f_{i,j}\|_\infty\varphi\left(\frac{\|f_{i,j}\|_1}{\|f_{i,j}\|_\infty}\right)}<\begin{cases} \|f_i\|_{\varphi,\psi}+\frac{\varepsilon}{2^{2i-n}(2^n-1)}&\quad \text{if }\,n\in\mathbb{N},\\
    \|f_i\|_{\varphi,\psi}+\frac{\varepsilon}{2^{2i}}&\quad \text{if }\,n=\infty.
    \end{cases}
    \]
    Then $|f|\leq\sum_{i=1}^n |f_i|\leq\sum_{i=1}^n\sum_{j=1}^\infty f_{i,j}$, and thus
    \[
    \|f\|_{\varphi,\psi}\leq \displaystyle\sum_{i=1}^n\displaystyle\sum_{j=1}^\infty{\psi(\sigma(i,j))\|f_{i,j}\|_\infty\varphi\left(\frac{\|f_{i,j}\|_1}{\|f_{i,j}\|_\infty}\right)},
    \]
    where 
    \[
    \sigma:\begin{cases}
        \{1,\dots,n\}\times\mathbb{N}\to\mathbb{N}&\quad \text{if }\,n\in\mathbb{N},\\
        \mathbb{N}\times\mathbb{N}\to\mathbb{N}&\quad \text{if }\,n=\infty,
    \end{cases}
    \]
    is a bijection. We can take the shifted pairing function
    \[
    \sigma(i,j)=
    \begin{cases}
        2^{i-1}(2j-1)& \quad \text{if }\, i<n,\\
        2^{i-1}j& \quad \text{if }\, i=n.
    \end{cases}
    \]
    By the subadditivity of the concave function $\psi$, we have
    \[
    \psi(\sigma(i,j))\leq \psi(2^{i-1}(2j-1))\leq 2^{i-1}\psi(2j-1)\leq 2^{i-1}\psi(2j)\leq 2^i\psi(j).
    \]
    Therefore
    \begin{align*}
        \|f\|_{\varphi,\psi}&\leq \displaystyle\sum_{i=1}^n \displaystyle\sum_{j=1}^\infty{\psi(\sigma(i,j))\|f_{i,j}\|_\infty\varphi\left(\frac{\|f_{i,j}\|_1}{\|f_{i,j}\|_\infty}\right)}\\
        &\leq \displaystyle\sum_{i=1}^n 2^i\displaystyle\sum_{j=1}^\infty{\psi(j)\|f_{i,j}\|_\infty\varphi\left(\frac{\|f_{i,j}\|_1}{\|f_{i,j}\|_\infty}\right)}\\
        &< \displaystyle\sum_{i=1}^n {2^i\|f_i\|_{\varphi,\psi}}+\varepsilon.
    \end{align*}
    By letting $\varepsilon\to0$, for $n=2$ we obtain the quasi-triangle inequality
    \begin{equation}\label{quasi-norm}
        \|f_1+f_2\|_{\varphi,\psi}\leq 4(\|f_1\|_{\varphi,\psi}+\|f_2\|_{\varphi,\psi}),\qquad f_1,f_2\in QA_{\varphi,\psi}.
    \end{equation}

    Let $\{f_n\}_{n=1}^\infty\subset QA_{\varphi,\psi}$ be a Cauchy sequence in $QA_{\varphi,\psi}$. We choose a subsequence $\{f_{n_k}\}_{k=1}^\infty$ such that $\|f_{n_{k+1}}-f_{n_k}\|_{\varphi,\psi}\leq 4^{-k}$ for all $k\in\mathbb{N}$. Then due to the above inequality for $n=\infty$, we have
    \[
    \left\|\displaystyle\sum_{k=1}^\infty (f_{n_{k+1}}-f_{n_{k}})\right\|_{\varphi,\psi}\leq \displaystyle\sum_{k=1}^\infty 2^k\|f_{n_{k+1}}-f_{n_k}\|_{\varphi,\psi}\leq \displaystyle\sum_{k=1}^\infty \frac{1}{2^k}<\infty.
    \]
    Therefore $f_{n_j}=\sum_{k=1}^{j-1} (f_{n_{k+1}}-f_{n_{k}})+f_{n_1}$ converges in $QA_{\varphi,\psi}$, which implies that $f_n$ converges in $QA_{\varphi,\psi}$, and thus $QA_{\varphi,\psi}$ is a complete space.

    The first lattice condition is trivially satisfied because $L^\infty\subset QA_{\varphi,\psi}$. Let $g\in QA_{\varphi,\psi}$ and $f$ be a measurable function such that $|f|\leq|g|$. Let $\{g_n\}_{n=1}^\infty\subset L^\infty$, $g_n\geq 0$ such that $|f|\leq|g|\leq\sum_{n=1}^\infty g_n$ and 
    \[
    \displaystyle\sum_{n=1}^\infty \psi(n)\|g_n\|_\infty\varphi\left(\frac{\|g_n\|_1}{\|g_n\|_\infty}\right)<\infty.
    \]
    Then clearly $f\in QA_{\varphi,\psi}$ and $\|f\|_{\varphi,\psi}\leq\|g\|_{\varphi,\psi}$, which proves the lattice property.   
\end{proof}

In the following, by using Proposition \ref{banach} we establish most of the parts of Theorem \ref{vlastnosti}, specifically \ref{r.i.}, \ref{krajni}, \ref{density}, and one implication ($\Leftarrow$) in \ref{L^infty} and \ref{L^1}. The remainder of the proof of Theorem \ref{vlastnosti} will be presented in Section \ref{section 4}.
\begin{proof}[Proof of Theorem \ref*{vlastnosti}, first part]
    (ii) Let $f\in L^\infty$, then clearly
    \begin{equation}\label{vnoreni infty}
        \|f\|_{\varphi,\psi}\leq \psi(1)\|f\|_\infty\varphi\left(\frac{\|f\|_1}{\|f\|_\infty}\right)\leq \varphi(1)\psi(1)\|f\|_\infty,
    \end{equation}
    which proves that $L^\infty\hookrightarrow QA_{\varphi,\psi}$.

    Let $f\in QA_{\varphi,\psi}$ and $\varepsilon>0$. We find $\{f_n\}_{n=1}^\infty\subset L^\infty,\,f_n\geq0$ such that $|f|\leq\sum_{n=1}^\infty f_n$ and 
    \[
    \|f\|_{\varphi,\psi}\leq \displaystyle\sum_{n=1}^\infty \psi(n)\|f_n\|_\infty\varphi\left(\frac{\|f_n\|_1}{\|f_n\|_\infty}\right)< \|f\|_{\varphi,\psi}+\varepsilon.
    \]
    Applying the Monotone Convergence Theorem and using the concavity of $\varphi$, we
    obtain
    \begin{align*}
        \|f\|_{\varphi,\psi}+\varepsilon&> \displaystyle\sum_{n=1}^\infty \psi(n)\|f_n\|_\infty\varphi\left(\frac{\|f_n\|_1}{\|f_n\|_\infty}\right)\geq \psi(1)\displaystyle\sum_{n=1}^\infty\|f_n\|_\infty\frac{\|f_n\|_1}{\|f_n\|_\infty}\varphi(1)\\
        &= \varphi(1)\psi(1)\left\|\displaystyle\sum_{n=1}^\infty f_n\right\|_1\geq \varphi(1)\psi(1)\|f\|_1.
    \end{align*}
    Since $\varepsilon$ was arbitrary, we obtain the desired continuous inclusion
    \begin{equation}\label{vnoreni 1}
        \|f\|_1\leq \frac{1}{\varphi(1)\psi(1)}\|f\|_{\varphi,\psi},\qquad f\in QA_{\varphi,\psi}.
    \end{equation}

\ref*{r.i.}
    The fact that $QA_{\varphi,\psi}$ is a quasi-Banach lattice was proven in Proposition \ref{banach}. Let $f\in QA_{\varphi,\psi}$ and $g$ be a measurable function such that $\lambda_f=\lambda_g$. By the Calderón Theorem \cite[\Rom{3}.Theorem~2.11]{MR928802} there exists a linear operator $T:L^1\to L^1$ such that $T|_{L^\infty}:L^\infty\to L^\infty$,
    \begin{equation*}
        \begin{aligned}
        \|T\|_{L^1\to L^1}&=\sup\{\|Tx\|_1:\,\|x\|_1\leq 1\}\leq 1,\\
        \|T\|_{L^\infty\to L^\infty}&=\sup\{\|Tx\|_\infty:\,\|x\|_\infty\leq 1\}\leq 1
        \end{aligned}
    \end{equation*}
    and $g=Tf$.

    Let $\varepsilon>0$, we find $\{f_n\}_{n=1}^\infty$ such that $f=\sum_{n=1}^\infty f_n$ and 
    \[
    \|f\|_{\varphi,\psi}\leq\displaystyle\sum_{n=1}^\infty \psi(n)\|f_n\|_\infty\varphi\left(\frac{\|f_n\|_1}{\|f_n\|_\infty}\right) < \|f\|_{\varphi,\psi}+\varepsilon.
    \]
    Then $T(\sum_{n=1}^k f_n)=\sum_{n=1}^kTf_n$, for $k\in \mathbb{N}$ and $Tf_n\in L^\infty$. Analogously to \eqref{usporadani}, since $x\varphi\left(\frac{\|Tf_n\|_1}{x}\right)$ is a non-decreasing function on $[\|Tf_n\|_1,\infty)$ for every $n\in\mathbb{N}$, and using the inequalities $\|Tf_n\|_1\leq\|Tf_n\|_\infty\leq\|f_n\|_\infty$ and $\|Tf_n\|_1\leq\|f_n\|_1$, we obtain
    \[
    \|Tf_n\|_\infty\varphi\left(\frac{\|Tf_n\|_1}{\|Tf_n\|_\infty}\right)\leq \|f_n\|_\infty\varphi\left(\frac{\|f_n\|_1}{\|f_n\|_\infty}\right). 
    \]
     Therefore
    \begin{align*}
        \left\|\sum_{n=k+1}^\infty Tf_n\right\|_{\varphi,\psi}&\leq \sum_{n=k+1}^\infty \psi(n)\|Tf_n\|_\infty\varphi\left(\frac{\|Tf_n\|_1}{\|Tf_n\|_\infty}\right)\\
        &\leq \displaystyle\sum_{n=k+1}^\infty \psi(n)\|f_n\|_\infty\varphi\left(\frac{\|f_n\|_1}{\|f_n\|_\infty}\right)\to 0,
    \end{align*}
    and 
    \[
    \left\|f-\sum_{n=1}^k f_n\right\|_{\varphi,\psi}\leq \sum_{n=k+1}^\infty \psi(n)\|f_n\|_\infty\varphi\left(\frac{\|f_n\|_1}{\|f_n\|_\infty}\right)\to 0.
    \]
    Hence by (\ref{vnoreni 1}) and combining the above inequalities,
    \begin{align*}
        \left\|g-\sum_{n=1}^\infty Tf_n\right\|_1&\leq \left\|Tf-\sum_{n=1}^k Tf_n\right\|_1+\left\|\sum_{n=k+1}^\infty Tf_n\right\|_1\\
        &\leq \left\|f-\sum_{n=1}^k f_n\right\|_1+\left\|\sum_{n=k+1}^\infty Tf_n\right\|_1\\
        &\leq \frac{1}{\varphi(1)\psi(1)}\left(\left\|f-\sum_{n=1}^k f_n\right\|_{\varphi,\psi}+\left\|\sum_{n=k+1}^\infty Tf_n\right\|_{\varphi,\psi}\right)\to 0.
    \end{align*}
    Thus $g=\sum_{n=1}^\infty Tf_n$ and we conclude
    \begin{align*}
        \|g\|_{\varphi,\psi}&\leq \sum_{n=1}^\infty \psi(n)\|Tf_n\|_\infty\varphi\left(\frac{\|Tf_n\|_1}{\|Tf_n\|_\infty}\right)\\
        &\leq \displaystyle\sum_{n=1}^\infty \psi(n)\|f_n\|_\infty\varphi\left(\frac{\|f_n\|_1}{\|f_n\|_\infty}\right)<\|f\|_{\varphi,\psi}+\varepsilon.
    \end{align*}
    This yields $g\in QA_{\varphi,\psi}$, and since $\varepsilon>0$ was arbitrary, we obtain 
    \[
    \|g\|_{\varphi,\psi}\leq \|f\|_{\varphi,\psi}.
    \]
    Applying Calderón's Theorem in the opposite direction (to the function $g$), we get
    \[
    \|f\|_{\varphi,\psi}=\|g\|_{\varphi,\psi},
    \]
    and this shows that $QA_{\varphi,\psi}$ is an r.i.\ quasi-Banach space.

    (iii) Let $f\in QA_{\varphi,\psi}$. We choose $\{f_n\}_{n=1}^\infty\subset L^\infty$ such that $f=\sum_{n=1}^\infty f_n$ and 
    \[
    \displaystyle\sum_{n=1}^\infty \psi(n)\|f_n\|_\infty\varphi\left(\frac{\|f_n\|_1}{\|f_n\|_\infty}\right)< \infty.
    \]
    Let us define $g_n=\sum_{j=1}^n f_j\in L^\infty$. Then 
    \begin{equation*}
        \|f-g_n\|_{\varphi,\psi}\leq \displaystyle\sum_{j=n+1}^\infty \psi(j)\|f_j\|_\infty\varphi\left(\frac{\|f_j\|_1}{\|f_j\|_\infty}\right)\to0.
    \end{equation*}
    Let $\varepsilon>0$, we find $n_0\in\mathbb{N}$ such that $\|f-g_{n_0}\|_{\varphi,\psi}<\varepsilon$.
    Since the set of simple functions is dense in $L^\infty$, we find a simple function $s$ such that $\|g_{n_0}-s\|_\infty<\varepsilon$. Then by  (\ref{quasi-norm}) and (\ref{vnoreni infty}),
    \[
    \|f-s\|_{\varphi,\psi}\leq 4(\|f-g_{n_0}\|_{\varphi,\psi}+\varphi(1)\psi(1)\|g_{n_0}-s\|_\infty)< 4(1+\varphi(1)\psi(1))\varepsilon.
    \]
    Hence the set of simple functions is dense in $QA_{\varphi,\psi}$.
    
    (iv) ($\Leftarrow$) It is enough to prove that $QA_{\varphi,\psi}\hookrightarrow L^\infty$. Since $\varphi(0_+)\neq 0$, we have $\varphi\asymp1$ on $(0,1]$, so we can assume that 
    \[
    \varphi(t)=\begin{cases}
        1& \quad \text{if }\, t\in(0,1],\\
        0& \quad \text{if }\, t=0.
    \end{cases}
    \]
    Let $f\in QA_{\varphi,\psi}$, we choose $\{f_n\}_{n=1}^\infty\subset L^\infty$ such that $f=\sum_{n=1}^\infty f_n$ and 
    \[
    \displaystyle\sum_{n=1}^\infty \psi(n)\|f_n\|_\infty<\infty.
    \]
    Then 
    \[
    \|f\|_\infty\leq \displaystyle\sum_{n=1}^\infty \|f_n\|_\infty\leq \frac{1}{\psi(1)}\displaystyle\sum_{n=1}^\infty \psi(n)\|f_n\|_\infty,
    \]
    therefore by taking infimum over both sides, we get $QA_{\varphi,\psi}\hookrightarrow L^\infty$.
    
    (v) ($\Leftarrow$) We can assume that $\varphi(t)=t$, $t\in [0,1]$.
    First we shall prove that $\|\cdot\|_{\varphi,\psi}$ is a norm. Let $f,g\in QA_{\varphi,\psi}$ and $\{f_n\}_{n=1}^\infty,\{g_n\}_{n=1}^\infty\subset L^\infty,\,f_n,g_n\geq0$ such that $|f|\leq\sum_{n=1}^\infty f_n$ and $|g|\leq\sum_{n=1}^\infty g_n$. Then 
    $|f+g|\leq|f|+|g| \leq \sum_{n=1}^\infty(f_n+g_n)$, and thus
    \[ \|f+g\|_{\varphi,\psi}\leq\displaystyle\sum_{n=1}^\infty\psi(n)\|f_n+g_n\|_1\leq \displaystyle\sum_{n=1}^\infty\psi(n)\|f_n\|_1+\displaystyle\sum_{n=1}^\infty\psi(n)\|g_n\|_1.
    \]
    Since $\{f_n\}_{n=1}^\infty$ and $\{g_n\}_{n=1}^\infty$ were arbitrary, we obtain that $\|\cdot\|_{\varphi,\psi}$ is a norm.

    Let $s=\sum_{i=1}^n a_i\chi_{A_i}$ be a simple function on $[0,1]$, where the sets $A_j$ are pairwise disjoint. Then 
    \[
    \|s\|_{\varphi,\psi}\leq \displaystyle\sum_{i=1}^n \|a_i\chi_{A_i}\|_{\varphi,\psi}\leq \displaystyle\sum_{i=1}^n \psi(1)\|a_i\chi_{A_i}\|_1=\psi(1)\|s\|_1.
    \]
    By part \ref{density}, the set of simple functions is dense in $QA_{\varphi,\psi}$, thus for all $\varepsilon>0$ and $f\in QA_{\varphi,\psi}$ we can find a simple function $s$ such that $\|f-s\|_{\varphi,\psi}<\varepsilon$. Then by (\ref{vnoreni 1}) also $\psi(1)\|f-s\|_1<\varepsilon$, and thus by (\ref{vnoreni 1}) and triangle inequality 
    \[
        \psi(1)\|f\|_1\leq \|f\|_{\varphi,\psi}\leq \|s\|_{\varphi,\psi}+\varepsilon\leq \psi(1)\|s\|_1+\varepsilon\leq \psi(1)
        \|f\|_1+2\varepsilon.
    \]
    Since $\varepsilon$ was arbitrary, we have 
    \[
    \psi(1)\|f\|_1\leq \|f\|_{\varphi,\psi}\leq \psi(1)\|f\|_1,\qquad f\in QA_{\varphi,\psi}.
    \]
    
    Since the set of simple functions is dense in $L^1$ and is a subset of $QA_{\varphi,\psi}$, $QA_{\varphi,\psi}$ is dense in $L^1$. Let $f\in L^1$, then by density we find $\{f_n\}_{n=1}^\infty\subset QA_{\varphi,\psi}$ such that $f_n\to f$ in $L^1$. Then $\{f_n\}_{n=1}^\infty$ is a Cauchy sequence in $L^1$ and by the above inequality is $\{f_n\}_{n=1}^\infty$ also a Cauchy sequence in $QA_{\varphi,\psi}$. According to Proposition \ref{banach}, $QA_{\varphi,\psi}$ is a quasi-Banach lattice, thus $\{f_n\}_{n=1}^\infty$ is convergent in $QA_{\varphi,\psi}$, let us denote its limit by $g\in QA_{\varphi,\psi}$. Then by (\ref{vnoreni 1}) and triangle inequality,
    \[
    \|f-g\|_1\leq \|f-f_n\|_1+\frac{1}{\psi(1)}\|f_n-g\|_{\varphi,\psi}\to 0,
    \]
    thus $f=g$ and this proves that $QA_{\varphi,\psi}=L^1$.
\end{proof}

\section{Embeddings between the Lorentz spaces and \texorpdfstring{$QA_{\varphi,\psi}$}{QA_varphi,psi}}\label{section 4}

In this section, we describe the connections between Lorentz spaces and the space $QA_{\varphi,\psi}$. As a consequence, we establish the remaining part of the proof of Theorem \ref{vlastnosti}. We begin with the Lorentz spaces into which the space $QA_{\varphi,\psi}$ is embedded.

\begin{thm}\label{incto}
    It holds that $QA_{\varphi,\psi}\hookrightarrow\Lambda_\varphi$ with norm less than or equal to $\frac{1}{\psi(1)}$.
\end{thm}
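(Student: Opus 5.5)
The plan is to compare the $QA_{\varphi,\psi}$ cost of each piece of an admissible decomposition with its $\Lambda_\varphi$ norm, and then use that $\Lambda_\varphi$ is a Banach lattice. Fix $f\in QA_{\varphi,\psi}$ and $\varepsilon>0$. Choose $\{f_n\}_{n=1}^\infty\subset L^\infty$, $f_n\geq0$, with $|f|\leq\sum_{n=1}^\infty f_n$ and
\[
\sum_{n=1}^\infty\psi(n)\|f_n\|_\infty\varphi\left(\frac{\|f_n\|_1}{\|f_n\|_\infty}\right)<\|f\|_{\varphi,\psi}+\varepsilon .
\]
Since $\psi$ is non-decreasing, $\psi(n)\geq\psi(1)$. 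It therefore suffices to prove the single-function estimate
\begin{equation*}
\|g\|_{\Lambda_\varphi}\leq \|g\|_\infty\varphi\left(\frac{\|g\|_1}{\|g\|_\infty}\right),\qquad 0\leq g\in L^\infty .
\end{equation*}
Granting this, the lattice property and the triangle inequality of the norm $\|\cdot\|_{\Lambda_\varphi}$ give
\[
\|f\|_{\Lambda_\varphi}\leq\Bigl\|\sum_n f_n\Bigr\|_{\Lambda_\varphi}\leq\sum_n\|f_n\|_{\Lambda_\varphi}\leq\frac{1}{\psi(1)}\bigl(\|f\|_{\varphi,\psi}+\varepsilon\bigr).
\]
The middle step, the countable subadditivity of $\|\cdot\|_{\Lambda_\varphi}$, holds because $\Lambda_\varphi$ has the Fatou property. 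Concretely, $(\sum_{n\le N}f_n)^\ast\uparrow(\sum_n f_n)^\ast$, so monotone convergence in the Stieltjes integral lets us pass from finite sums to the full series. Letting $\varepsilon\to0$ then gives the claimed bound with constant $\frac{1}{\psi(1)}$.

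The key step is the single-function estimate, which is an extremal statement. Among nonnegative functions with prescribed $\|g\|_\infty=M$ and $\|g\|_1=a$, the quantity $\int_0^1 g^\ast\,\mathrm{d}\varphi$ is largest for $M\chi_{[0,a/M]}$, whose norm is exactly $M\varphi(a/M)$. To prove this, I write
\[
\|g\|_{\Lambda_\varphi}=\int_0^1 g^\ast\,\mathrm{d}\varphi=\int_0^M\varphi(\lambda_g(s))\,\mathrm{d}s,
\]
using the layer-cake formula $g^\ast=\int_0^\infty\chi_{[0,\lambda_g(s))}\,\mathrm{d}s$ and $\|\chi_{[0,t)}\|_{\Lambda_\varphi}=\varphi(t)$. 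This is the continuous analogue of the formula $\|s\|_{\Lambda_\varphi}=\sum_k b_k\varphi(\lambda(B_k))$ for simple functions. Alternatively, one can prove the estimate for simple functions via that formula and pass to general $g$ by approximating from below, using monotone convergence.

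Then I apply Jensen's inequality for the concave function $\varphi$ with respect to the probability measure $\mathrm{d}s/M$ on $[0,M]$:
\[
\frac1M\int_0^M\varphi(\lambda_g(s))\,\mathrm{d}s\leq\varphi\left(\frac1M\int_0^M\lambda_g(s)\,\mathrm{d}s\right)=\varphi\left(\frac{\|g\|_1}{M}\right).
\]
Multiplying by $M$ gives the estimate. For the simple-function version, the same inequality is the finite Jensen inequality with weights $b_k/M$, since $\sum_k b_k=M$ and $\sum_k b_k\lambda(B_k)=\|g\|_1$.

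The only delicate point is the justification of the layer-cake identity for $\Lambda_\varphi$ when $\varphi(0_+)>0$, which requires care at the jump. The simple-function route avoids this, because the formula for simple functions was already recorded in the preliminaries and includes the jump term. Case $g=0$ is trivial by the convention $0/0=0$.
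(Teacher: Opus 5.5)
Your argument is correct and is essentially the paper's proof: you bound $\|f\|_{\Lambda_\varphi}$ by $\sum_n\|f_n\|_{\Lambda_\varphi}$, apply the single-function estimate $\|g\|_{\Lambda_\varphi}\le\|g\|_\infty\varphi\bigl(\|g\|_1/\|g\|_\infty\bigr)$, and use $\psi(n)\ge\psi(1)$. The only difference is that the paper quotes this estimate from \cite[Proposition~2.2]{MR2898729}, whereas you prove it yourself via $\|g\|_{\Lambda_\varphi}=\int_0^{\|g\|_\infty}\varphi(\lambda_g(s))\,\mathrm{d}s$ and Jensen's inequality; that argument is valid, and you also make explicit the Fatou-property justification for countable subadditivity that the paper leaves implicit.
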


\begin{proof}
    The following fact was proven in \cite[Proposition 2.2]{MR2898729}: if $\varphi:[0,1]\to[0,\infty)$ is a non-decreasing concave function with $\varphi(0)=0$, then for all $f\in L^\infty$ with $f\not\equiv0$, we have
    \begin{equation}\label{fact}
        \|f\|_{\Lambda_\varphi}\leq \|f\|_\infty\varphi\left(\frac{\|f\|_1}{\|f\|_\infty}\right).
    \end{equation}
    Let $f\in QA_{\varphi,\psi}$ and $\{f_n\}_{n=1}^\infty\subset L^\infty$ be such that $f=\sum_{n=1}^\infty f_n$. Then 
    \[
    \|f\|_{\Lambda_\varphi}\leq \displaystyle\sum_{n=1}^\infty \|f_n\|_{\Lambda_\varphi}\leq\frac{1}{\psi(1)}\displaystyle\sum_{n=1}^\infty \psi(n)\|f_n\|_\infty\varphi\left(\frac{\|f_n\|_1}{\|f_n\|_\infty}\right).
    \]
    This completes the proof.
\end{proof}

    Let $(X,\|\cdot\|)$ be a quasi-normed space whose dual separates the points, then the Mackey norm $\|\cdot\|^c$ on $X$ is defined by 
    \[
    \|x\|^c=\inf\{\alpha>0:\,x\in\alpha\,\mathrm{conv}(B_X)\},\qquad x\in X,
    \]
    where $\mathrm{conv}\,(B_X)$ is the convex hull of the unit ball. The completion of $(X,\|\cdot\|^c)$ is denoted by $\widehat{X}$ and called the Banach envelope of $X$. We shall prove that the Banach envelope of the space $QA_{\varphi,\psi}$ is equal to the Lorenz space $\Lambda_\varphi$.

\begin{thm}\label{envelope}
    We have the equality
    \[
    \widehat{QA_{\varphi,\psi}}=\Lambda_\varphi.
    \]
\end{thm}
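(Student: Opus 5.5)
We need to show that the Mackey norm $\|\cdot\|^c$ on $QA_{\varphi,\psi}$ is equivalent to $\|\cdot\|_{\Lambda_\varphi}$, and that the completion is all of $\Lambda_\varphi$. The plan rests on two inequalities, one in each direction.

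\emph{Lower bound.} By Theorem~\ref{incto}, $\|f\|_{\Lambda_\varphi}\leq \frac{1}{\psi(1)}\|f\|_{\varphi,\psi}$, so the unit ball $B_{QA_{\varphi,\psi}}$ lies in $\psi(1)^{-1}B_{\Lambda_\varphi}$. Since $\Lambda_\varphi$ is normed, its ball is convex, and hence $\mathrm{conv}(B_{QA_{\varphi,\psi}})\subset \psi(1)^{-1}B_{\Lambda_\varphi}$. This gives
\[
\|f\|_{\Lambda_\varphi}\leq \frac{1}{\psi(1)}\|f\|^c,\qquad f\in QA_{\varphi,\psi}.
\]
In particular, the dual of $QA_{\varphi,\psi}$ separates points, since $\Lambda_\varphi\hookrightarrow L^1$ and $L^\infty$ functionals suffice. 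So the Mackey norm is well defined.

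\emph{Upper bound on simple functions.} Take a simple function written as in (\ref{jedoducha}), $s=\sum_{k=1}^n b_k\chi_{B_k}$ with $b_k>0$ and nested $B_k$. Each summand satisfies
\[
\|\chi_{B_k}\|_{\varphi,\psi}\leq \psi(1)\varphi(\lambda(B_k)),
\]
by taking the trivial decomposition with $f_1=\chi_{B_k}$. Put $\beta_k=b_k\varphi(\lambda(B_k))$ and $\beta=\sum_k\beta_k=\|s\|_{\Lambda_\varphi}$. Then
\[
s=\sum_k \frac{\beta_k}{\beta}\cdot \frac{\beta\,\chi_{B_k}}{\varphi(\lambda(B_k))},
\]
which is a convex combination of elements of $QA_{\varphi,\psi}$-quasinorm at most $\psi(1)\beta$. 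Hence $\|s\|^c\leq \psi(1)\|s\|_{\Lambda_\varphi}$. The nested representation is exactly what makes the coefficients sum to the $\Lambda_\varphi$ norm, so this is the key computation.

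\emph{Passage to all $f$ and identification of the completion.} This is the step I expect to need the most care. Since $\|\cdot\|^c\le\|\cdot\|_{\varphi,\psi}$, part~\ref{density} of Theorem~\ref{vlastnosti} shows that simple functions are dense in $(QA_{\varphi,\psi},\|\cdot\|^c)$. For $f\in QA_{\varphi,\psi}$, choose simple $s_j\to f$ in $QA_{\varphi,\psi}$. Then $s_j\to f$ in $\|\cdot\|^c$, and by the lower bound also in $\Lambda_\varphi$. Passing to the limit in the two-sided estimate on simple functions gives
\[
\psi(1)\|f\|_{\Lambda_\varphi}\leq\|f\|^c\leq\psi(1)\|f\|_{\Lambda_\varphi}
\]
on all of $QA_{\varphi,\psi}$, using the triangle inequality for the norm $\|\cdot\|^c$. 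Thus $\|\cdot\|^c$ is equivalent to the restriction of $\|\cdot\|_{\Lambda_\varphi}$. The completion $\widehat{QA_{\varphi,\psi}}$ is then the closure of $QA_{\varphi,\psi}$ in $\Lambda_\varphi$, which is complete.

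It remains to show density, and this must be handled when $\varphi(0_+)\neq0$. If $\varphi(0_+)=0$, simple functions are dense in $\Lambda_\varphi$: for $f\ge0$ in $\Lambda_\varphi$, monotone truncations $\min(f,N)$ followed by simple approximation converge in $\Lambda_\varphi$ by dominated convergence in $\int f^*\varphi'$, and the closure is all of $\Lambda_\varphi$. If $\varphi(0_+)\neq0$, then $\Lambda_\varphi=L^\infty=QA_{\varphi,\psi}$ by part~\ref{L^infty} ($\Leftarrow$), and the equality $\widehat{QA_{\varphi,\psi}}=\Lambda_\varphi$ holds directly via the norm equivalence. In both cases the completion is identified with $\Lambda_\varphi$ with equivalent norms.
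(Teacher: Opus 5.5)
Your proposal is correct and follows essentially the same route as the paper: the lower bound $\psi(1)\|f\|_{\Lambda_\varphi}\le\|f\|^c$ from Theorem~\ref{incto}, the upper bound on simple functions written in the nested form (\ref{jedoducha}), extension to all of $QA_{\varphi,\psi}$ via Theorem~\ref{vlastnosti}\ref{density}, and density of simple functions in $\Lambda_\varphi$. Your explicit convex combination simply unpacks the paper's use of the triangle inequality for $\|\cdot\|^c$ together with $\|\cdot\|^c\le\|\cdot\|_{\varphi,\psi}$, and your case split on $\varphi(0_+)$ is harmless but unnecessary: when $\varphi(0_+)\neq0$ one has $\Lambda_\varphi=L^\infty$ with equivalent norms, and simple functions are uniformly dense in $L^\infty$.
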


\begin{proof}
    Since $QA_{\varphi,\psi}\hookrightarrow \Lambda_\varphi$, and the dual of $\Lambda_\varphi$ separates points, it follows that the dual of $QA_{\varphi,\psi}$ also separates points. The above continuous inclusion gives
    \begin{equation}\label{cnorm}
        \|f\|_{\Lambda_\varphi}\leq \frac{1}{\psi(1)}\|f\|_{\varphi,\psi}^c,\qquad f\in QA_{\varphi,\psi}.
    \end{equation}
    We claim that the converse inequality holds. For every simple function $s=\sum_{j=1}^n a_j\chi_{A_j}$ in the form (\ref{jedoducha}), we have that 
    \begin{align*}
        \|s\|_{\varphi,\psi}^c&\leq \displaystyle\sum_{j=1}^n a_j\|\chi_{A_j}\|_{\varphi,\psi}^c\leq \displaystyle\sum_{j=1}^n a_j\|\chi_{A_j}\|_{\varphi,\psi}\leq \displaystyle\sum_{j=1}^n a_j\psi(1)\|\chi_{A_j}\|_\infty\varphi\left(\frac{\|\chi_{A_j}\|_1}{\|\chi_{A_j}\|_\infty}\right)\\
        &= \psi(1)\displaystyle\sum_{j=1}^n a_j \varphi(\lambda(A_j))=\psi(1)\|s\|_{\Lambda_\varphi}.
    \end{align*}
    By Theorem \ref{vlastnosti}\ref{density} and $\|\cdot\|_{\varphi,\psi}^c\leq \|\cdot\|_{\varphi,\psi}$, the set of simple functions is dense in $(QA_{\varphi,\psi},{\|\cdot\|_{\varphi,\psi}^c})$, therefore for all $\varepsilon>0$ and $f\in QA_{\varphi,\psi}$ we can find a simple function $s$ such that $\|f-s\|_{\varphi,\psi}^c<\varepsilon$. Then by (\ref{cnorm}) also $\psi(1)\|f-s\|_{\Lambda_\varphi}<\varepsilon$, and thus by (\ref{cnorm}) and triangle inequality 
    \[
    \frac{1}{\psi(1)}\|f\|_{\varphi,\psi}^c\leq \frac{1}{\psi(1)}\|s\|_{\varphi,\psi}^c+\frac{\varepsilon}{\psi(1)}\leq \|s\|_{\Lambda_\varphi}+\frac{\varepsilon}{\psi(1)}\leq \|f\|_{\Lambda_\varphi}+\frac{2\varepsilon}{\psi(1)}.
    \]
    Since $\varepsilon$ was arbitrary, we have desired inequality,
    and thus
    \[
    \|f\|_{\varphi,\psi}^c=\psi(1)\|f\|_{\Lambda_\varphi},\qquad f\in QA_{\varphi,\psi}.
    \]
    The set of simple functions is dense in $\Lambda_\varphi$ (see \cite[\Rom{2}.Theorem~5.5]{MR928802}). Since the set of simple functions is a subset of ${QA_{\varphi,\psi}}$, ${QA_{\varphi,\psi}}$ is dense in $\Lambda_\varphi$. Therefore, $\Lambda_\varphi=(\Lambda_\varphi,\psi(1)\|\cdot\|_{\Lambda_\varphi})$ is the desired completion of $(QA_{\varphi,\psi},\|\cdot\|_{\varphi,\psi}^c)$.
\end{proof}

As a consequence of Theorem \ref{envelope}, we obtain the rest of the proof of Theorem \ref{vlastnosti}, that is, ($\Rightarrow$) in \ref{L^infty} and \ref{L^1}.

\begin{proof}[Proof of Theorem \ref*{vlastnosti}, second part]
    \ref*{L^infty} ($\Rightarrow$) Let $QA_{\varphi,\psi}=L^\infty$, then by Theorem \ref{envelope} we have $\Lambda_\varphi=L^\infty$. Therefore 
    \begin{equation*}
        \varphi(t)=\varphi_{\Lambda_\varphi}(t)\asymp\varphi_{L^\infty}(t)=1,\qquad t\in(0,1],
    \end{equation*}
    thus $\varphi(0_+)\neq0$.
    
    \ref*{L^1} ($\Rightarrow$) Similarly as in the previous part, by using Theorem \ref{envelope}, we obtain $\varphi(t)\asymp t$, $t\in(0,1]$.
\end{proof}

From now on, we shall suppose that $\varphi(0_+)=0$ and $\varphi\not\asymp \mathrm{id}$. As a consequence we get that $\varphi$ is continuous on $[0,1]$, $\varphi_+'(0)=\infty$ and there exists a neighborhood of zero where $\varphi$ is strictly concave.
In the next proposition, we construct a family of Lorentz spaces which are embedded into the space $QA_{\varphi,\psi}$.
\begin{prop}\label{prop o vnoreni 0}
    For every sequence $s=\{s_n\}_{n=1}^\infty\in (0,1]^\mathbb{N}$,
    \[
    \Lambda_{\widetilde{\varphi_s}}\hookrightarrow QA_{\varphi,\psi},
    \]
    where $\varphi_s$ is a quasi-concave function on $[0,1]$ defined as 
    \[
    \varphi_s(t)=
        \begin{cases}
            \displaystyle\inf_{n\in\mathbb{N}}\left\{ \max \{s_n,t\}\frac{\varphi(s_n)}{s_n}\psi(n)\right\} & \quad \text{if }\,t\in (0,1],\\
            0 & \quad \text{if }\,t=0.
        \end{cases}
    \]
\end{prop}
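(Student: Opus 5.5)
We would first record that $\varphi_s$ is quasi-concave. It is an infimum of the functions $t\mapsto \max\{s_n,t\}\frac{\varphi(s_n)}{s_n}\psi(n)$. Each of these is non-decreasing and positive on $(0,1]$, and has $\frac{\max\{s_n,t\}}{t}$ non-increasing. Both properties pass to the infimum, and positivity follows from $\varphi_s(t)\geq \inf_n \varphi(s_n)\psi(1)\cdot\frac{t}{s_n}\cdot\frac{s_n}{1}$, i.e.\ $\varphi_s(t)\ge t\,\psi(1)\inf_n\frac{\varphi(s_n)}{s_n}\geq t\,\psi(1)\varphi(1)>0$. Hence the least concave majorant $\widetilde{\varphi_s}$ satisfies $\varphi_s\le\widetilde{\varphi_s}\le 2\varphi_s$, and $\Lambda_{\widetilde{\varphi_s}}$ is well defined.

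The core step is the bound $\|\chi_A\|_{\varphi,\psi}\le C\,\varphi_s(\lambda(A))$ for every measurable $A$ with $t=\lambda(A)>0$. Fix $n$. We want a decomposition of $\chi_A$ whose cost is $\max\{s_n,t\}\frac{\varphi(s_n)}{s_n}\psi(n)$, and we distinguish two cases.

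If $t\le s_n$, take $f_n=\frac{t}{s_n}\chi_B$ with $B\supseteq A$ and $\lambda(B)=s_n$, placed in the $n$-th slot. This does not dominate $\chi_A$. Instead we put a single function in slot $n$ and zeros elsewhere, taking $f_n=\chi_A$. Its cost is $\psi(n)\varphi(t)$, and since $\varphi(t)\le\varphi(s_n)=s_n\frac{\varphi(s_n)}{s_n}$, this is at most $\psi(n)\varphi(s_n)=\max\{s_n,t\}\frac{\varphi(s_n)}{s_n}\psi(n)$.

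If $t>s_n$, split $A$ into $k=\lceil t/s_n\rceil\le 2t/s_n$ pieces of measure at most $s_n$, placed in slots $n,n+1,\dots,n+k-1$. This is where the weights $\psi(n+j)$ cause trouble, and it is the main obstacle. A cleaner choice is to use a single function in slot $n$: $f_n=\chi_A$ costs $\psi(n)\varphi(t)\le \psi(n)\,t\,\frac{\varphi(s_n)}{s_n}$, because $\frac{\varphi(t)}{t}\le\frac{\varphi(s_n)}{s_n}$ for $t\ge s_n$ by concavity. So in both cases a single term in slot $n$ suffices, and $\|\chi_A\|_{\varphi,\psi}\le\psi(n)\varphi(\lambda(A))$ is dominated by the $n$-th candidate.

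Taking the infimum over $n$ gives $\|\chi_A\|_{\varphi,\psi}\le\varphi_s(\lambda(A))$. Note this shows the infimum is essentially $\psi(1)\varphi$-type, so the statement may be weaker than it looks; the argument is still valid.

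To pass from characteristic functions to all of $\Lambda_{\widetilde{\varphi_s}}$, take a nonnegative simple function written as in \eqref{jedoducha}, $s=\sum_k b_k\chi_{B_k}$ with nested $B_k$. Place the terms $b_k\chi_{B_k}$ as separate summands. Using a single slot per level would incur factors $\psi(\cdot)$ depending on position, so instead we apply the countable quasi-triangle estimate from the proof of Proposition \ref{banach}. That estimate carries weights $2^k$, which are too large, so the better route is to bound the whole function directly. For a fixed $n$, put $f_n=s$ alone in slot $n$; its cost is $\psi(n)\|s\|_\infty\varphi(\|s\|_1/\|s\|_\infty)$, and this is not controlled by $\|s\|_{\Lambda_{\widetilde{\varphi_s}}}$.

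Therefore we would use the Banach envelope idea in reverse. Since $\varphi_s\ge\psi(1)^{-1}\cdots$, the simplest path is to note $\varphi_s(t)\ge\psi(1)\inf_n\max\{s_n,t\}\frac{\varphi(s_n)}{s_n}\ge\psi(1)\varphi(t)$, using $\varphi(t)\le\max\{s_n,t\}\varphi(s_n)/s_n$. Then $\Lambda_{\widetilde{\varphi_s}}\hookrightarrow\Lambda_\varphi$, so the embedding into $QA_{\varphi,\psi}$ reduces to an atomic decomposition. Write $f^*$ dyadically by level sets: $f\le\sum_j 2^{j+1}\chi_{\{|f|>2^j\}}$, with the $j$-th term assigned to slot $n_j$ chosen near-optimally for $\lambda\{|f|>2^j\}$, and with the $n_j$ made distinct by the reindexing $\psi(\sigma)\le2\psi(\cdot)$ trick. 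Controlling this reindexing loss uniformly is the step we expect to be hardest. We would try to absorb it by summing geometrically in $j$ and comparing $\sum_j 2^j\varphi_s(\lambda_f(2^j))$ with $\|f\|_{\Lambda_{\widetilde{\varphi_s}}}$.
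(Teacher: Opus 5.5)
Your check that $\varphi_s$ is quasi-concave is correct, and so is the bound $\|\chi_A\|_{\varphi,\psi}\le\psi(1)\varphi(\lambda(A))\le\varphi_s(\lambda(A))$. But these are the easy part, and the proposal does not prove the embedding.

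The characteristic-function bound does not make the statement weaker than it looks. The fundamental function of $QA_{\varphi,\psi}$ is $\psi(1)\varphi$ anyway. In general $\varphi_s$ is much larger than $\psi(1)\varphi$; in the proof of Theorem~\ref{trida neobs}\ref{vnor} it is $\asymp\tau$. Since $\|\cdot\|_{\varphi,\psi}$ is only a quasi-norm, control on characteristic functions gives nothing for general functions. Likewise, $\Lambda_{\widetilde{\varphi_s}}\hookrightarrow\Lambda_\varphi$ does not help, because $QA_{\varphi,\psi}\hookrightarrow\Lambda_\varphi$ already holds.

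The real content is the passage to general functions, and you leave it open. You assign dyadic levels to near-optimal indices $n_j$ and then make the $n_j$ distinct by reindexing, but you never estimate the loss. As sketched, the $r$-th level sharing an index gets its weight multiplied by a factor increasing in $r$ (e.g.\ $2^r$ from $\psi(\sigma(i,j))\le 2^i\psi(j)$). The number of levels sharing an index is unbounded in general. Nothing in your argument shows that the dyadic decay absorbs this.

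The missing idea is that no reindexing is needed, because several pieces can share one slot. Your two-case concavity argument for $\chi_A$ works verbatim for any $0\le g\in L^\infty$, $g\not\equiv 0$, and any $n$:
\[
\|g\|_\infty\varphi\left(\frac{\|g\|_1}{\|g\|_\infty}\right)\le \max\{s_n\|g\|_\infty,\|g\|_1\}\frac{\varphi(s_n)}{s_n}.
\]
The right-hand side is subadditive in $g$, since $\max\{a+b,c+d\}\le\max\{a,c\}+\max\{b,d\}$.

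Now take $h=\sum_{i=1}^m a_i\chi_{A_i}$ as in \eqref{jedoducha}. For each $i$ choose $n_i$ with $\max\{s_{n_i},\lambda(A_i)\}\frac{\varphi(s_{n_i})}{s_{n_i}}\psi(n_i)\le 2\varphi_s(\lambda(A_i))$. Put $g_n=\sum_{i:\,n_i=n}a_i\chi_{A_i}$ into slot $n$. This gives $\|h\|_{\varphi,\psi}\le 2\sum_i a_i\varphi_s(\lambda(A_i))\le 2\|h\|_{\Lambda_{\widetilde{\varphi_s}}}$. The general case then follows, as in the paper, from three facts: simple functions are dense in $\Lambda_{\widetilde{\varphi_s}}$, $QA_{\varphi,\psi}$ is complete, and both spaces embed into $L^1$ (to identify the limit).

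The paper's proof contains the displayed estimate and the $\max$-inequality. However, it applies them to all of $h$ with one common $n$ before passing to $\sum_i a_i\varphi_s(\lambda(A_i))$. As your own remark about placing $h$ in a single slot shows, a common $n$ yields only $\inf_n\sum_i$, not $\sum_i\inf_n$. So this grouping by $n_i$ is also the step needed to complete the paper's argument.
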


\begin{proof}
    Clearly $\varphi_s$ is a quasi-concave function on $[0,1]$. We observe that if $f\in L^\infty$ with $f\not\equiv 0$ and $n\in\mathbb{N}$, then the concavity of $\varphi$ implies
    \begin{align*}
        \|f\|_\infty\varphi\left(\frac{\|f\|_1}{\|f\|_\infty}\right)&= \|f\|_\infty\max\left\{1,\frac{\|f\|_1}{s_n\|f\|_\infty}\right\}\min\left\{1,\frac{s_n\|f\|_\infty}{\|f\|_1}\right\}\varphi\left(\frac{\|f\|_1}{\|f\|_\infty}\right)\\
        &\leq \|f\|_\infty\max\left\{1,\frac{\|f\|_1}{s_n\|f\|_\infty}\right\}\varphi\left(\min\left\{1,\frac{s_n\|f\|_\infty}{\|f\|_1}\right\}\frac{\|f\|_1}{\|f\|_\infty}\right)\\
        &\leq \|f\|_\infty\max\left\{1,\frac{\|f\|_1}{s_n\|f\|_\infty}\right\}\varphi(s_n)\\
        &=\max\{s_n\|f\|_\infty,\|f\|_1\}\frac{\varphi(s_n)}{s_n}.
    \end{align*}
    Observe that for all $a,b,c,d\in\mathbb{R}$, it holds
    \begin{align*}
        \max\{a+b,c+d\}\leq \max\{a,c\}+\max\{b,d\}.
    \end{align*}
    Hence for every simple function $h=\sum_{i=1}^m a_i\chi_{A_i}\in L^\infty$ in the form (\ref{jedoducha}), we have
    \begin{align*}
         \|h\|_{\varphi,\psi}\leq \psi(1)\|h\|_\infty\varphi\left(\frac{\|h\|_1}{\|h\|_\infty}\right)&\leq \psi(1)\max\{s_n\|h\|_\infty,\|h\|_1\}\frac{\varphi(s_n)}{s_n}\\
         &\leq \max\left\{s_n\sum_{i=1}^m\|a_i\chi_{A_i}\|_\infty,\sum_{i=1}^m\|a_i\chi_{A_i}\|_1\right\}\frac{\varphi(s_n)}{s_n}\psi(n)\\
         &\leq \sum_{i=1}^m \max\{s_n\|a_i\chi_{A_i}\|_\infty,\|a_i\chi_{A_i}\|_1\}\frac{\varphi(s_n)}{s_n}\psi(n)\\
         &= \sum_{i=1}^m a_i \max\{s_n,\lambda(A_i)\}\frac{\varphi(s_n)}{s_n}\psi(n).
    \end{align*}
    Therefore 
    \begin{align*}
        \|h\|_{\varphi,\psi}\leq \sum_{i=1}^m a_i\varphi_s(\lambda(A_i))\leq \sum_{i=1}^m a_i\widetilde{\varphi_s}(\lambda(A_i))=\|h\|_{\Lambda_{\widetilde{\varphi_s}}}.
    \end{align*}
    
    Let $f\in\Lambda_{\widetilde{\varphi_s}}$. Since the set of simple functions is dense in $\Lambda_{\widetilde{\varphi_s}}$ (see \cite[\Rom{2}.Theorem~5.5]{MR928802}), we find a sequence of simple functions $\{f_j\}_{j=1}^\infty$ such that $f_j\to f$ in $\Lambda_{\widetilde{\varphi_s}}$. Then $\{f_j\}_{j=1}^\infty$ is a Cauchy sequence in $\Lambda_{\widetilde{\varphi_s}}$ and by the above inequality,$\{f_j\}_{j=1}^\infty$ is also a Cauchy sequence in $QA_{\varphi,\psi}$. According to Proposition \ref{banach}, $QA_{\varphi,\psi}$ is a quasi-Banach lattice, thus $\{f_j\}_{j=1}^\infty$ is convergent in $QA_{\varphi,\psi}$, let us denote its limit by $g\in QA_{\varphi,\psi}$. Then by $\Lambda_{\widetilde{\varphi_s}}\hookrightarrow L^1$ with a norm less than or equal to $K>0$, and by (\ref{vnoreni 1}),
    \[
    \|f-g\|_1\leq K\|f-f_j\|_{\Lambda_{\widetilde{\varphi_s}}}+\frac{1}{\varphi(1)\psi(1)}\|f_j-g\|_{\varphi,\psi}\to 0,
    \]
    thus $f=g$. Then by (\ref{quasi-norm}), we have
    \begin{align*}
        \|f\|_{\varphi,\psi}&\leq 4\|f-f_j\|_{\varphi,\psi}+4\|f_j\|_{\varphi,\psi}\leq 4\|f-f_j\|_{\varphi,\psi}+4\|f_j\|_{\Lambda_{\widetilde{\varphi_s}}}\\
        &\leq 4\|f-f_j\|_{\varphi,\psi}+4\|f_j-f\|_{\Lambda_{\widetilde{\varphi_s}}}+4\|f\|_{\Lambda_{\widetilde{\varphi_s}}}.
    \end{align*}
    By taking limit over both sides, we obtain the required continuous inclusion.
\end{proof}

The previous proposition is not easy to use in applications due to the implicit definition of $\varphi_s$ by using an infimum. The following proposition shows that, under certain conditions on the sequence $s$, the function $\varphi_s$ is equivalent to the function $\alpha_s$, which can be explicitly expressed as a product involving $\varphi$ and $\psi$.
\begin{prop}\label{prop o vnoreni}
    Let $s:[1,\infty)\to(0,1]$ be a decreasing function such that 
    \begin{enumerate}[label=(\roman*)]
        \item $\displaystyle\lim_{x\to\infty} s(x)=0$, 
        \item $\displaystyle\lim_{x\to\infty} \varphi(s(x))\psi(x)=0$, 
        \item there exists $x_0\geq1$ such that $(\varphi\circ s)\psi$ is a non-increasing function on $[x_0,\infty)$,
        \item there exists a constant $C>0$ such that $\frac{\varphi(s(n+1))}{s(n+1)}\leq C\frac{\varphi(s(n))}{s(n)}$ for all $n\in\mathbb{N}$.
    \end{enumerate}
    Then $\Lambda_{\widetilde{\varphi_s}}=\Lambda_{\widetilde{\alpha_s}}$, where 
    \[
    \alpha_s(t)=
        \begin{cases}
            \varphi(t)\psi(s^{-1}(t))& \quad \text{if }\,t\in (0,1],\\
            0 & \quad \text{if }\,t=0.
        \end{cases}
    \]
\end{prop}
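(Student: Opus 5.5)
The plan is to show that $\varphi_s\asymp\alpha_s$ on $(0,1]$, writing $s_n=s(n)$. Granting this, $\alpha_s$ is equivalent to the quasi-concave function $\varphi_s$ and hence to its least concave majorant. Then $\widetilde{\varphi_s}\asymp\widetilde{\alpha_s}$, and the Lorentz norms $\int_0^1 f^\ast\,\mathrm{d}\widetilde{\varphi_s}$ and $\int_0^1 f^\ast\,\mathrm{d}\widetilde{\alpha_s}$ are equivalent. This is easiest to see on simple functions in the form (\ref{jedoducha}), where both norms are $\sum b_k\widetilde{\phi}(\lambda(B_k))$; density of simple functions in both spaces then finishes the argument.

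Fix $t\in(0,s(x_1)]$, where $x_1\ge x_0$ is large, and let $N=N(t)$ be the largest integer with $s_N\ge t$. Then $N\le s^{-1}(t)<N+1$; here $s^{-1}$ is understood as a generalized inverse if $s$ is not continuous. The infimum defining $\varphi_s(t)$ splits into two families.

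\emph{Indices with $s_n\ge t$, i.e.\ $n\le N$.} The term equals $\varphi(s_n)\psi(n)$. By (iii) this is non-increasing in $n$ for $x_0\le n\le N$. The finitely many indices $n<x_0$ give terms bounded below by a fixed positive constant, whereas by (ii) and (i) $\alpha_s(t)\to0$ as $t\to0_+$. So for small $t$ the infimum over this family is comparable to $\varphi(s_N)\psi(N)$.

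\emph{Indices with $s_n<t$, i.e.\ $n\ge N+1$.} The term equals $t\,\frac{\varphi(s_n)}{s_n}\psi(n)$. This is non-decreasing in $n$, because $s$ decreases, $\varphi(u)/u$ is non-increasing and $\psi$ is non-decreasing. So the infimum over this family is the term with $n=N+1$.

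\emph{Upper bound.} Take the index $N+1$ and apply (iv). Since $s_N\ge t$, we have $\varphi(s_N)/s_N\le\varphi(t)/t$. Subadditivity of $\psi$ gives $\psi(N+1)\le2\psi(N)\le2\psi(s^{-1}(t))$. Together,
\[
\varphi_s(t)\le t\,\tfrac{\varphi(s_{N+1})}{s_{N+1}}\psi(N+1)\le 2C\,\varphi(t)\psi(s^{-1}(t))=2C\alpha_s(t).
\]

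\emph{Lower bound, first family.} Using $s_N\ge t$ and $\psi(s^{-1}(t))\le\psi(N+1)\le2\psi(N)$,
\[
\varphi(s_N)\psi(N)\ge\varphi(t)\psi(N)\ge\tfrac12\alpha_s(t).
\]

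\emph{Lower bound, second family.} Since $s_{N+1}<t$, we have $\varphi(s_{N+1})/s_{N+1}\ge\varphi(t)/t$. Hence
\[
t\,\tfrac{\varphi(s_{N+1})}{s_{N+1}}\psi(N+1)\ge\varphi(t)\psi(N+1)\ge\alpha_s(t).
\]

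These bounds give $\varphi_s\asymp\alpha_s$ on some interval $(0,\delta]$. On $[\delta,1]$ both functions are bounded above and below by positive constants, so they are equivalent there as well. This covers in particular the range $t>s(1)$, where $\alpha_s$ is read with a constant extension of $s^{-1}$.

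The main obstacle is the bookkeeping at the transition index $N(t)$. Condition (iii) only gives monotonicity from $x_0$ on, and $s^{-1}(t)$ is not an integer, so one has to control the jumps between $N$ and $N+1$. This is exactly where (iv) and the subadditivity of $\psi$ enter; everything else is monotonicity of $\varphi(u)/u$.
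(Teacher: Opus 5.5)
Your proposal is correct and is essentially the paper's proof. You split the infimum defining $\varphi_s(t)$ at the index where $s_n$ drops below $t$, note that the tail infimum is attained at the first such index, and use (iv) together with subadditivity of $\psi$ for the upper bound. The differences are cosmetic: you compare each family directly with $\alpha_s(t)$, so your lower bound does not need (iv), you handle the indices $n<x_0$ via $\alpha_s(t)\to0$ instead of the paper's constant $C_1$, and you make explicit the range $t>s(1)$ and the passage from $\varphi_s\asymp\alpha_s$ to equality of the Lorentz spaces, which the paper leaves implicit.
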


\begin{proof}
    Denote $s_n=s(n)$ for all $n\in\mathbb{N}$. Then $s_n\searrow 0$ and $\displaystyle\lim_{n\to\infty}\varphi(s_n)\psi(n)= 0$. For all $t\in[s_{k+1},s_k)$, $k\geq 1$, we have
    \begin{align*}
        \varphi_s(t)&=\displaystyle\inf_{n\geq 1}\max\{{s_n},t\}\frac{\varphi(s_n)}{s_n}\psi(n)\\
        &=\min\left\{\displaystyle\inf_{1\leq n\leq k}\max\{{s_n},t\}\frac{\varphi(s_n)}{s_n}\psi(n),\displaystyle\inf_{n>k}\max\{{s_n},t\}\frac{\varphi(s_n)}{s_n}\psi(n)\right\}\\
        &=\min\left\{\displaystyle\inf_{1\leq n\leq k}s_n\frac{\varphi(s_n)}{s_n}\psi(n),\displaystyle\inf_{n>k}t\frac{\varphi(s_n)}{s_n}\psi(n)\right\}\\
        &=\min\left\{\displaystyle\inf_{1\leq n\leq k}\varphi(s_n)\psi(n),t\frac{\varphi(s_{k+1})}{s_{k+1}}\psi(k+1)\right\}.
    \end{align*}
    Since $\displaystyle\lim_{x\to\infty}\varphi(s(x))\psi(x)= 0$ and $(\varphi\circ s)\psi$ is a non-increasing function on $[x_0,\infty)$, there exists $m_0\in\mathbb{N}$, $m_0\geq x_0$, such that $\{\varphi(s_m)\psi(m)\}_{m=m_0}^\infty$ is non-increasing and $\varphi(s_m)\psi(m)\leq \displaystyle\inf_{1\leq n<m_0}\varphi(s_n)\psi(n)$ for all $m\geq m_0$. We find $C_1>0$ such that for all $m<m_0$, it holds
    \[
    \displaystyle\inf_{1\leq n\leq m}\varphi(s_n)\psi(n)\leq \varphi(s_m)\psi(m)\leq C_1\displaystyle\inf_{1\leq n\leq m}\varphi(s_n)\psi(n).
    \]
    Then either $k\geq m_0$, in which case $\displaystyle\inf_{1\leq n\leq k}\varphi(s_n)\psi(n)=\varphi(s_k)\psi(k)$, or $k<m_0$ and by the above inequality $\displaystyle\inf_{1\leq n\leq k}\varphi(s_n)\psi(n)\asymp \varphi(s_k)\psi(k)$.
    Hence
    \begin{align*}
        \varphi_s(t)&=\min\left\{\displaystyle\inf_{1\leq n\leq k}\varphi(s_n)\psi(n),t\frac{\varphi(s_{k+1})}{s_{k+1}}\psi(k+1)\right\}\\
        &\asymp \min\left\{\varphi(s_k)\psi(k),t\frac{\varphi(s_{k+1})}{s_{k+1}}\psi(k+1)\right\}\\
        &=\min\left\{s_k\frac{\varphi(s_k)}{s_k}\psi(k),t\frac{\varphi(s_{k+1})}{s_{k+1}}\psi(k+1)\right\}.
    \end{align*}
    By subadditivity of $\psi$, we have
    \begin{align*}
        \min\left\{s_k\frac{\varphi(s_k)}{s_k}\psi(k),t\frac{\varphi(s_{k+1})}{s_{k+1}}\psi(k+1)\right\}&\leq t\frac{\varphi(s_{k+1})}{s_{k+1}}\psi(k+1)\leq Ct\frac{\varphi(s_{k})}{s_{k}}\psi(2k)\\
        &\leq 2Ct\frac{\varphi(s_{k})}{s_{k}}\psi(k),
    \end{align*}
    and
    \begin{align*}
        \min\left\{s_k\frac{\varphi(s_k)}{s_k}\psi(k),t\frac{\varphi(s_{k+1})}{s_{k+1}}\psi(k+1)\right\}\geq t\frac{\varphi(s_k)}{s_k}\psi(k).
    \end{align*}
    Therefore
    \[
    \varphi_s(t)\asymp t\frac{\varphi(s_k)}{s_k}\psi(k)\ \asymp t\frac{\varphi(t)}{t}\psi(k)\asymp \varphi(t)\psi(s^{-1}(t)).
    \]
\end{proof}
\begin{cor}\label{cor 1}
    If $\psi\asymp 1$ on $[1,\infty)$, then $QA_{\varphi,\psi}=\Lambda_\varphi$. 
\end{cor}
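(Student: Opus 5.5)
One inclusion, $QA_{\varphi,\psi}\hookrightarrow\Lambda_\varphi$, is already Theorem \ref{incto}. It therefore suffices to show $\Lambda_\varphi\hookrightarrow QA_{\varphi,\psi}$. I would do this by applying Proposition \ref{prop o vnoreni 0} to a constant sequence, which avoids Proposition \ref{prop o vnoreni} and its hypotheses entirely.

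Take $s_n=1$ for all $n\in\mathbb{N}$. For $t\in(0,1]$ we have $\max\{s_n,t\}=1$, so
\[
\varphi_s(t)=\inf_{n\in\mathbb{N}}\varphi(1)\psi(n)=\varphi(1)\psi(1),
\]
since $\psi$ is non-decreasing. This is a constant, so $\Lambda_{\widetilde{\varphi_s}}=L^\infty$, which is useless. The constant choice is too crude: we need $s_n$ that take every value in $(0,1]$.

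The fix is to look back at the pointwise estimate inside the proof of Proposition \ref{prop o vnoreni 0}. For every simple $h=\sum_i a_i\chi_{A_i}$ written in the form (\ref{jedoducha}), that proof in effect bounds $\|h\|_{\varphi,\psi}$ by the sum of the single-term estimates. Using the subadditivity of $\|\cdot\|_{\varphi,\psi}$ from the proof of Proposition \ref{banach}, with $\psi(\sigma(i,j))\le 2^i\psi(j)$, only gives a geometric loss, which is not good enough. The better route is to build one decomposition of $h$ directly: set $f_i=a_i\chi_{A_i}$, placed at index $i$, with $m$ terms. This costs
\[
\sum_{i=1}^m\psi(i)\,a_i\varphi(\lambda(A_i)),
\]
and the weights $\psi(i)$ are bounded by hypothesis, since $\psi\asymp1$ on $[1,\infty)$. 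So
\[
\|h\|_{\varphi,\psi}\le\Big(\sup_{n}\psi(n)\Big)\sum_i a_i\varphi(\lambda(A_i))=\Big(\sup_n\psi(n)\Big)\|h\|_{\Lambda_\varphi}.
\]
This is exactly the estimate in the proof of Theorem \ref{envelope} with $\psi(1)$ replaced by $\sup\psi<\infty$, and it works because the sum is $\sum_i f_i=h$ with $f_i\in L^\infty$, $f_i\ge0$, which is an admissible decomposition in Definition \ref{def1}.

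Next, pass from simple functions to all of $\Lambda_\varphi$ by the same density and completeness argument used at the end of Proposition \ref{prop o vnoreni 0}. Simple functions are dense in $\Lambda_\varphi$, so for $f\in\Lambda_\varphi$ pick simple $f_j\to f$ in $\Lambda_\varphi$. By the estimate above, $(f_j)$ is Cauchy in $QA_{\varphi,\psi}$, so it converges there to some $g$ by Proposition \ref{banach}. The embeddings into $L^1$, namely (\ref{vnoreni 1}) and $\Lambda_\varphi\hookrightarrow L^1$, identify $g=f$. Finally, the quasi-triangle inequality (\ref{quasi-norm}) gives $\|f\|_{\varphi,\psi}\le 4\sup\psi\,\|f\|_{\Lambda_\varphi}$.

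The only real point to check is that the direct decomposition is admissible and that boundedness of $\psi$ on $\mathbb{N}$ (its $\asymp1$ upper bound) is the one place the hypothesis enters. Everything else repeats earlier arguments, so I do not expect a genuine obstacle.
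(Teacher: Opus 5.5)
Your argument is correct, and it takes a more direct route than the paper's.

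\textbf{The paper's route.} It assumes without loss of generality $\psi\equiv1$ and takes $s(x)=1/x$. It then checks hypotheses (i)--(iv) of Proposition~\ref{prop o vnoreni} to get $\varphi_s\asymp\alpha_s=\varphi$, and finally combines Proposition~\ref{prop o vnoreni 0} with Theorem~\ref{incto}.

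\textbf{Your route.} You charge the $i$-th layer $a_i\chi_{A_i}$ of the nested representation (\ref{jedoducha}) to the index $i$. This gives $\|h\|_{\varphi,\psi}\le(\sup_n\psi(n))\|h\|_{\Lambda_\varphi}$ for every simple $h\ge0$ at once. You then reuse the density and completeness argument from the end of Proposition~\ref{prop o vnoreni 0}. For the Cauchy step, apply the simple-function bound to $|f_j-f_k|$; passing to the absolute value changes neither norm.

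\textbf{What each buys.} The paper's route presents the corollary as the bounded case of a general mechanism. That mechanism also produces the embeddings $\Lambda_{\widetilde{\alpha_s}}\hookrightarrow QA_{\varphi,\psi}$ for unbounded $\psi$, which are needed later for Theorem~\ref{trida neobs}\ref{vnor}. Your decomposition cannot do this, because layer $i$ is charged $\psi(i)$ and the number of layers is unbounded.

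In exchange, your proof is self-contained:
\begin{itemize}
\item It needs none of the hypotheses (i)--(iv).
\item It does not need $\varphi(0_+)=0$, which the paper uses to verify (ii).
\item It avoids the infimum bookkeeping in the proof of Proposition~\ref{prop o vnoreni 0}. As written there, a single index $n$ is fixed for all layers, and the infimum over $n$ is then taken layer by layer. Making that step rigorous requires grouping the layers by their chosen index, and your decomposition sidesteps this entirely.
\end{itemize}

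In a final write-up, drop the abandoned constant-sequence attempt from your opening paragraphs, since the proof never uses it.
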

\begin{proof}
    Without loss of generality, we can assume that $\psi\equiv1$ on $[1,\infty)$. Let us take $s(x)=\frac{1}{x}$, $x\in[1,\infty)$. Then $\displaystyle\lim_{x\to\infty} s(x)=\displaystyle\lim_{x\to\infty}\varphi(s(x))=0$, $\varphi\circ s$ is a non-increasing function on $[1,\infty)$ and for all $n\in \mathbb{N}$, it holds
    \[
    \frac{\varphi(s(n+1))}{s(n+1)}=(n+1)\varphi\left(\frac{1}{n+1}\right)\leq 2n\varphi\left(\frac{1}{n}\right)=2\frac{\varphi(s(n))}{s(n)}.
    \]
    By Proposition \ref{prop o vnoreni}, we get $\Lambda_{\widetilde{\varphi_s}}=\Lambda_\varphi$. Therefore, combining Theorem \ref{incto} and Proposition \ref{prop o vnoreni 0}, we obtain the desired equality.
\end{proof}
   
\section{Comparison to rearrangement-invariant Banach spaces}

Throughout this section, we assume that assumptions (\ref{predpoklad}) and (\ref{predpoklad2}) hold. For completeness, we repeat these assumptions. Assume that there exist constants $c\in(0,1)$, $C\in[1,\infty)$, and $p\in(0,1]$ such that
\begin{equation*}
    \frac{\varphi(t)}{t^c}\ \text{is non-decreasing on $(0,p]$},
\end{equation*}
and for $n\in \mathbb{N}$,
\begin{equation*}
    \psi(n^2)\leq C\psi(n).
\end{equation*}

\begin{prop}\label{thm2}
    For every $N\in\mathbb{N}$, $N\geq2$ there exists a sequence $\{A_j\}_{j=1}^{2N}$ of pairwise disjoint measurable sets in $[0,1]$ such that for $f=\sum_{j=1}^{2N} a_j\chi_{A_j}$ with $a_j=\frac{1}{2N\varphi(\lambda(A_j))}$, we have 
    \[    
        \|f\|_{\varphi,\psi}\geq\frac{2^c-1}{8}\psi(N).
    \]
\end{prop}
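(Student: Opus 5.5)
The plan is to adapt the Carro--Mastyło--Rodríguez-Piazza lower-bound construction for $QA$. The idea is to choose $2N$ disjoint sets whose measures decrease extremely fast, so that a single bounded nonnegative function cannot cheaply cover more than one of the ``blocks'' $a_j\chi_{A_j}$.

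\textbf{Construction.} Fix $t_1\le p$ and define $t_{j+1}$ recursively, much smaller than $t_j$. Let $A_j$ be disjoint with $\lambda(A_j)=t_j$; this is possible since $\sum_j t_j\le 1$. Put $a_j=\frac{1}{2N\varphi(t_j)}$. The heights $a_j$ then increase very fast, because $\varphi(t)\to0$. Each block satisfies $\|a_j\chi_{A_j}\|_\infty\varphi(\lambda(A_j))=\frac1{2N}$. For a nonnegative $g\in L^\infty$, write
\[
\operatorname{cost}(g)=\|g\|_\infty\varphi\bigl(\|g\|_1/\|g\|_\infty\bigr),
\]
and define the covering fraction
\[
\beta_j(g)=\frac{1}{a_jt_j}\int_{A_j}\min\{g,a_j\}\,\mathrm{d}\lambda\in[0,1].
\]
If $|f|\le\sum_n f_n$, subadditivity of $\min\{\cdot,a_j\}$ gives $\sum_n\beta_j(f_n)\ge1$ for every $j$.

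\textbf{Two estimates on $\operatorname{cost}(g)$.}

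First, a linear estimate:
\[
\operatorname{cost}(g)\ge \kappa\frac1{2N}\sum_j\beta_j(g).
\]
This follows from \eqref{fact}, $\operatorname{cost}(g)\ge\|g\|_{\Lambda_\varphi}$, together with a computation of the $\Lambda_\varphi$-norm of $\sum_j\min\{g,a_j\}\chi_{A_j}$. The computation uses the doubling-type bound $\varphi(2t)\ge 2^c\varphi(t)$ on $(0,p/2]$, which comes from \eqref{predpoklad}. I expect this to be where the constant $2^c-1$ enters, via a geometric-series comparison of the form $\sum_k 2^{-ck}$.

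Second, a ``one function covers essentially one block'' estimate. If $\beta_j(g)$ and $\beta_k(g)$ are both non-negligible with $j<k$, then $\|g\|_\infty\gtrsim a_k$ and $\|g\|_1\gtrsim\beta_ja_jt_j$. Monotonicity of $x\varphi(a/x)$ then gives
\[
\operatorname{cost}(g)\ge a_k\varphi\bigl(\beta_ja_jt_j/a_k\bigr).
\]
Because the $t_j$ are chosen sparse enough, this quantity is at least $\psi(N)$ (times a constant) unless $\beta_j$ is tiny. Choosing the $t_j$ recursively, with the dependence on $N$ and $\psi(N)$ built in, is the main technical point. This is where \eqref{predpoklad} is used to bound $\varphi(\theta x)\ge\theta^{c}\varphi(x)$ from below for small $\theta$.

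\textbf{Combining.} Take a near-optimal decomposition $|f|\le\sum_n f_n$. If some $f_n$ with $n<N$ pays the large ``two-block'' cost, the sum is already at least $\psi(1)\cdot\mathrm{const}\cdot\psi(N)$, hence at least $\mathrm{const}\cdot\psi(N)$ after normalising. Otherwise, each of $f_1,\dots,f_{N-1}$ covers at most one block up to a small error. So at least $N+1$ blocks, and in total covering mass at least $\frac{N+1}{2}$, must be supplied by $\{f_n\}_{n\ge N}$. Since $\psi$ is non-decreasing, each such term carries weight $\psi(n)\ge\psi(N)$. The linear estimate then yields
\[
\sum_{n\ge N}\psi(n)\operatorname{cost}(f_n)\ge\psi(N)\,\kappa\,\frac{N+1}{4N}\ge\frac{2^c-1}{8}\psi(N).
\]

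\textbf{Main obstacle.} The hardest step is the second estimate, making quantitative that one bounded function cannot cheaply cover two sparse blocks. The delicate part is fixing the sparsity recursion for $t_j$ so that the bound beats $\psi(N)$ while every constant stays explicit.
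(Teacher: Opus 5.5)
Your overall plan is sound, and it is essentially the paper's proof reorganised. The paper splits on whether $\|\sum_{n\ge N}f_n\|_{\Lambda_\varphi}\ge\frac{2^c-1}{8}$. If not, it uses level sets and flat half-blocks to show that fewer than $N$ blocks are half-covered by the tail, and then pigeonholes the remaining $N+1$ blocks among $f_1,\dots,f_{N-1}$. You split on the head instead and use covering fractions.

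The gap is in the step you yourself call the main technical point. You never fix the recursion, and the inequality you name for the two-block bound goes the wrong way. For $\theta<1$, \eqref{predpoklad} gives $\varphi(\theta x)\le\theta^c\varphi(x)$, not $\ge$. If you anchor at $t_j$ and scale down, concavity only returns $a_k\varphi(\beta_ja_jt_j/a_k)\ge\beta_ja_j\varphi(t_j)=\beta_j/(2N)$, so there is no gain at all. You also dropped a factor $\beta_k$, since you only know $\|g\|_\infty\ge\beta_ka_k$.

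The gain comes from anchoring at the smaller measure $t_k$ and scaling up. Take the threshold $\beta_j(g),\beta_k(g)\ge\frac1{2N}$ with $j<k$, and set $\gamma(t)=\varphi(t)/t$. Monotonicity of $x\mapsto x\varphi(L/x)$ gives
\[
\operatorname{cost}(g)\ge\frac{a_k}{2N}\,\varphi\Bigl(\frac{a_jt_j}{a_k}\Bigr)=\frac{\varphi(Rt_k)}{4N^2\varphi(t_k)},\qquad R=\frac{\gamma(t_k)}{\gamma(t_j)}\ge1.
\]
Since $Rt_k=\frac{\varphi(t_k)}{\varphi(t_j)}t_j\le t_j\le p$, \eqref{predpoklad} yields $\varphi(Rt_k)\ge R^c\varphi(t_k)$, so $\operatorname{cost}(g)\ge R^c/(4N^2)$. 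It therefore suffices to choose the recursion with $\gamma(t_{j+1})\ge(N^2\psi(N)/\psi(1))^{1/c}\gamma(t_j)$, which is essentially \eqref{def-posloupnost}. The head case then gives $\psi(1)\operatorname{cost}(f_l)\ge\psi(N)/4$.

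Your linear estimate is true, but your final display needs $\kappa\ge\frac{2^c-1}{2}\cdot\frac{N}{N+1}$, so you must prove it with an explicit constant. Plugging the ratio $2^{-c}$ into the splitting argument below only gives $\kappa=\frac{2^c-1}{2^{c+1}-1}$. That is too small for large $N$ once $c>\log_2\frac32$.

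The recursion above repairs this:
\begin{enumerate}
\item Write $M=\gamma(t_{j+1})/\gamma(t_j)$. Then \eqref{predpoklad} gives $t_{j+1}\le M^{-1/(1-c)}t_j$, hence $\varphi(t_{j+1})\le r\varphi(t_j)$ with $r=M^{-c/(1-c)}\le4^{-1/(1-c)}$.
\item For $0\le x_j\le t_j$ and $X=\sum_jx_j$, use $\gamma(t_j)\le\gamma(X)$ when $t_j\ge X$ and $\gamma(t_j)x_j\le\varphi(t_j)$ when $t_j<X$. Summing the geometric series gives $\sum_j\gamma(t_j)x_j\le\frac{2-r}{1-r}\varphi(X)$.
\item Apply this at each level $s$ with $x_j=\lambda_{h_j}(s)$ and $h_j=\min\{g,a_j\}\chi_{A_j}$. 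Integrate using $\|h\|_{\Lambda_\varphi}=\int_0^\infty\varphi(\lambda_h(s))\,ds$ and $\gamma(t_j)\|h_j\|_1=\beta_j/(2N)$.
\end{enumerate}
This gives $\kappa=\frac{1-r}{2-r}\ge\frac{2^c-1}{2}$.

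With these repairs your route is a valid alternative. A per-function Lorentz bound, valid for arbitrary $g$, replaces the paper's level sets and its case split on the tail, at the cost of needing strong decay of $\varphi(t_j)$. The paper only evaluates $\Lambda_\varphi$-norms of explicit flat half-blocks, where $\varphi(t+h)-\varphi(h)\ge(2^c-1)\varphi(t)$ and mere halving $\mu_{j+1}\le\mu_j/2$ suffice. That inequality, not a geometric series, is where $2^c-1$ enters.
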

    
\begin{proof}
    Define $\gamma(t)=\frac{\varphi(t)}{t}$, $t\in(0,1]$. By concavity of 
    $\varphi$, $\gamma$ is a non-increasing function, satisfying $\gamma(0_+)=\varphi_+'(0)=\infty$. We can find $\mu_1\in(0,\frac{p}{2}]$ such that $\varphi$ is strictly concave on $(0,\mu_1]$. Then $\gamma$ is decreasing on $(0,\mu_1]$, in particular $\gamma$ is invertible. Clearly for $x\geq1$,
    \[
        \gamma\left(\frac{1}{x}\right)= x\varphi\left(\frac{1}{x}\right)\leq x\varphi(1)\leq x\gamma(\mu_1),
    \]
    and so
    \[
        1\geq x\gamma^{-1}(x\gamma(\mu_1)).
    \]
    Thus for $1\leq x_1<x_2$, we have
    \begin{align*}
        x_2\gamma(\mu_1)&=\frac{x_2}{x_1}\gamma(\gamma^{-1}(x_1\gamma(\mu_1)))=\frac{\varphi(\gamma^{-1}(x_1\gamma(\mu_1)))}{\frac{x_1}{x_2}\gamma^{-1}(x_1\gamma(\mu_1)))}\geq \frac{\varphi\left(\frac{x_1}{x_2}\gamma^{-1}(x_1\gamma(\mu_1))\right)}{\frac{x_1}{x_2}\gamma^{-1}(x_1\gamma(\mu_1)))}\\
        &=\gamma\left(\frac{x_1}{x_2}\gamma^{-1}(x_1\gamma(\mu_1))\right),
    \end{align*}
    and therefore
    \[
        x_2\gamma^{-1}(x_2\gamma(\mu_1))\leq x_1\gamma^{-1}(x_1\gamma(\mu_1)),
    \]
    thus $x\gamma^{-1}(x\gamma(\mu_1))$ is non-increasing on $[1,\infty)$, and hence $x\gamma^{-1}(x\gamma(\mu_1))\leq p$, for $x\in[1,\infty)$. Let $N\in\mathbb{N}$, $N\geq 2$ and we define a positive sequence $\{\mu_j\}_{j=1}^{2N}$ as follows:
    \begin{equation}\label{def-posloupnost}
        \mu_{j+1}=\gamma^{-1}\left(\left(N^3\frac{\psi(N)}{\psi(1)}\right)^\frac{1}{c}\gamma(\mu_j)\right).
    \end{equation} 
    Observe that the sequence $\{\mu_j\}_{j=1}^{2N}$ is well defined, because $N^3\psi(N)\geq 2\psi(1)$, and therefore
    \[
    \gamma(\mu_{j+1})=\left(N^3\frac{\psi(N)}{\psi(1)}\right)^\frac{1}{c}\gamma(\mu_j)\geq 2\gamma(\mu_j)= \frac{\varphi(\mu_j)}{\frac{\mu_j}{2}}\geq \frac{\varphi\left(\frac{\mu_j}{2}\right)}{\frac{\mu_j}{2}} = \gamma\left(\frac{\mu_j}{2}\right).
    \]
    Thus
    \begin{equation}\label{posloupnost}
        \begin{aligned}
            \mu_{j+1}&\leq\frac{\mu_j}{2},\\
            \mu_1&\leq\frac{p}{2}\leq \frac{1}{2}.
        \end{aligned}
    \end{equation}
    Let us define 
    \[
    a_j=\frac{1}{2N\varphi(\mu_j)},\qquad 1\leq j\leq2N.
    \]
    Using (\ref{posloupnost}), we find measurable pairwise disjoint sets $A_1,\dots,A_{2N}$ in $[0,1]$ such that $\lambda(A_j)=\mu_j$ for all $1\leq j\leq 2N$.
    We show that $f=\sum_{j=1}^{2N} a_j\chi_{A_j}$ is the desired function. Let $\{f_n\}_{n=1}^\infty\subset L^\infty,\,f_n\geq 0$ be such that $f=\sum_{n=1}^\infty f_n$, then it suffices to prove that
    \[
    \frac{2^c-1}{8}\psi(N)\leq \displaystyle\sum_{n=1}^\infty \psi(n)\|f_n\|_\infty\varphi\left(\frac{\|f_n\|_1}{\|f_n\|_\infty}\right).
    \]
    Now, set $g_N=\sum_{n=N}^\infty f_n$. Since by Theorem \ref{incto} it holds $g_N\in\Lambda_\varphi$, we have two cases.

    Case 1: $\|g_N\|_{\Lambda_\varphi}\geq \frac{2^c-1}{8}$, then by (\ref{fact}),
    \[
    \frac{2^c-1}{8}\psi(N) \leq \psi(N)\|g_N\|_{\Lambda_\varphi} \leq \displaystyle\sum_{n=N}^\infty \psi(n)\|f_n\|_{\Lambda_\varphi} \leq \displaystyle\sum_{n=1}^\infty \psi(n)\|f_n\|_\infty\varphi\left(\frac{\|f_n\|_1}{\|f_n\|_\infty}\right).
    \]

    Case 2: $\|g_N\|_{\Lambda_\varphi}<\frac{2^c-1}{8}$, then for $1\leq j\leq 2N$ and $1\leq l\leq N-1$, we define the sets
    \[
    B_{j,l}=\left\{x\in A_j:\,f_l(x)>\frac{a_j}{2N}\right\}
    \]
    and 
    \[
    B_{j,N}=\left\{x\in A_j:\,g_N(x)>\frac{a_j}{2}\right\}.
    \]
    Then $A_j=\bigcup_{l=1}^N B_{j,l}$, because 
    \begin{align*}
        A_j&\supset\bigcup_{l=1}^N B_{j,l}=\left\{x\in A_j:\,f_1(x)>\frac{a_j}{2N}\lor\dots\lor f_{N-1}(x)>\frac{a_j}{2N}\lor g_N(x)>\frac{a_j}{2} \right\}\\
        &\supset \left\{x\in A_j:\,\sum_{n=1}^\infty f_n(x)> \frac{2N-1}{2N}a_j\right\}= \left\{x\in A_j:\,\sum_{j=1}^{2N} a_j\chi_{A_j}(x)> \frac{2N-1}{2N}a_j\right\}\\
        &=A_j.
    \end{align*}

    Let $J$ be the set of $j$ such that $\lambda(B_{j,N})\geq\frac{\mu_j}{2}$. For all $j\in J$, choose $B_j\subset B_{j,N}$ such that $\lambda(B_j)=\frac{\mu_j}{2}$. Set $g=\sum_{j\in J} \frac{a_j}{2}\chi_{B_j}$. Then, since $a_{j+1}\geq a_j$, we have that, if $J=\{j_k\}_{k=1}^s$, $j_1<\dotsb<j_s$,
    \[
    g^\ast(t)=\displaystyle\sum_{l=1}^s \frac{a_{j_l}}{2}\chi_{[h_{j_l},h_{j_l}+\frac{\mu_{j_l}}{2})}(t),
    \]
    where $h_{j_{l}}=h_{j_{l+1}}+\frac{\mu_{j_{l+1}}}{2}$ and $h_{j_s}=0$.
    Then by (\ref{posloupnost}), 
    \[
    h_{j_l}=\frac{\mu_{j_{l+1}}}{2}+\dotsb+\frac{\mu_{j_s}}{2}\leq \frac{\mu_{j_l}}{4}+\frac{\mu_{j_l}}{8}+\dotsb+\frac{\mu_{j_l}}{2^{s-l}}\leq\frac{\mu_{j_l}}{2}.
    \]
    Since for all $0<h\leq t\leq \frac{p}{4}$, by concavity of $\varphi$ and (\ref{predpoklad}),
    \[
    \varphi(t+h)-\varphi(h)=\frac{\varphi(t+h)-\varphi(h)}{t}t\geq \frac{\varphi(2t)-\varphi(t)}{t}t=(2t)^c\frac{\varphi(2t)}{(2t)^c}-\varphi(t)\geq (2^c-1)\varphi(t),
    \]
    we obtain
    \begin{align*}
        \|g\|_{\Lambda_\varphi}&=\int_0^1 g^\ast(t)\varphi'(t)\mathrm{d}t=\displaystyle\sum_{l=1}^s \frac{a_{j_l}}{2}\left(\varphi\left(h_{j_l}+\frac{\mu_{j_l}}{2}\right)-\varphi(h_{j_l})\right)\\
        &\geq \displaystyle\sum_{l=1}^s \frac{a_{j_l}}{2}(2^c-1)\varphi\left(\frac{\mu_{j_l}}{2}\right)\geq \displaystyle\sum_{l=1}^s \frac{2^c-1}{4} a_{j_l}\varphi(\mu_{j_l})=\frac{s}{N}\frac{2^c-1}{8}.
    \end{align*}
    From here we get, by $g_N\geq g$,
    \[
    \frac{2^c-1}{8}>\|g_N\|_{\Lambda_\varphi}\geq \|g\|_{\Lambda_\varphi}\geq \frac{s}{N}\frac{2^c-1}{8},
    \]
    and consequently $s<N$.

    On the other hand, if $j\in\{1,\dots,2N\}\setminus J$, then $\lambda(\bigcup_{l=1}^{N-1} B_{j,l})>\frac{\mu_j}{2}$ and hence, there exists $l=l(j)$ such that $\lambda(B_{j,l})>\frac{\mu_j}{2N}$.
    Because cardinality of $\{1,\dots,2N\}\setminus J$ is greater than $N$ and there are only $N-1$ $l\mathrm{'s}$, we have that, there exist $j,k\in\{1,\dots,2N\}\setminus J$, $j<k$ for which exists the same $l=l(j)=l(k)$ and
    \[
    \lambda(B_{j,l})>\frac{\mu_j}{2N},\qquad  \lambda(B_{k,l})>\frac{\mu_k}{2N}.
    \]
    Then 
    \[
    f_l\geq f_l\chi_{B_{j,l}}+f_l\chi_{B_{k,l}}> \frac{a_j}{2N}\chi_{B_{j,l}}+\frac{a_k}{2N}\chi_{B_{k,l}}.
    \]
    Therefore by (\ref{usporadani}) and concavity of $\varphi$,
    \begin{align*}
        \|f_l\|_\infty\varphi\left(\frac{\|f_l\|_1}{\|f_l\|_\infty}\right)&\geq \frac{a_k}{2N}\varphi\left(\frac{\frac{a_j\lambda(B_{j,l})+a_k\lambda(B_{k,l})}{2N}}{\frac{a_k}{2N}}\right)
        \geq \frac{1}{4N^2\varphi(\mu_k)}\varphi\left(\frac{a_j\frac{\mu_j}{2N}+a_k\frac{\mu_k}{2N}}{\frac{1}{2N\varphi(\mu_k)}}\right)\\
        &=\frac{1}{4N^2\varphi(\mu_k)}\varphi\left(\frac{\frac{\varphi(\mu_k)}{\varphi(\mu_j)}\mu_j+\mu_k}{2N}\right)
        \geq \frac{1}{8N^3\varphi(\mu_k)}\varphi\left(\frac{\varphi(\mu_k)}{\varphi(\mu_j)}\mu_j\right)\\
        &\geq \frac{2^c-1}{8N^3}\frac{\mu_j}{\varphi(\mu_j)}\gamma\left(\frac{\varphi(\mu_k)}{\varphi(\mu_j)}\mu_j\right)
        \geq \frac{2^c-1}{8N^3}\frac{\mu_j}{\varphi(\mu_j)}\gamma\left(\frac{\varphi(\mu_{j+1})}{\varphi(\mu_j)}\mu_j\right).
    \end{align*}
    Since $x\gamma^{-1}(x\gamma(\mu_1))\leq p$ and also $x\gamma^{-1}(x^j\gamma(\mu_1))\leq p$, for $x\in[1,\infty)$, we obtain by (\ref{predpoklad}),
    \[
        \frac{\varphi(x\gamma^{-1}(x^j\gamma(\mu_1)))}{(x\gamma^{-1}(x^j\gamma(\mu_1)))^c}\geq \frac{\varphi(\gamma^{-1}(x^j\gamma(\mu_1)))}{(\gamma^{-1}(x^j\gamma(\mu_1)))^c},
    \]
    therefore
    \[
        \varphi(x\gamma^{-1}(x^j\gamma(\mu_1)))\geq x^c\varphi(\gamma^{-1}(x^j\gamma(\mu_1))),
    \]
    and so
    \[
        \gamma(x\gamma^{-1}(x^j\gamma(\mu_1)))\geq x^{c-1}\gamma(\gamma^{-1}(x^j\gamma(\mu_1)))=x^{c+j-1}\gamma(\mu_1),
    \]
    which implies
    \[
        x\gamma^{-1}(x^j\gamma(\mu_1)))\leq \gamma^{-1}(x^{c+j-1}\gamma(\mu_1)).
    \]
    By choosing $x=\left(N^3\frac{\psi(N)}{\psi(1)}\right)^\frac{1}{c}$ and using (\ref{def-posloupnost}), we get
    \begin{align*}
        \left(N^3\frac{\psi(N)}{\psi(1)}\right)^\frac{1}{c}\gamma^{-1}\left(\left(N^3\frac{\psi(N)}{\psi(1)}\right)^\frac{j}{c}\gamma(\mu_1))\right)&\leq \gamma^{-1}\left(\left(N^3\frac{\psi(N)}{\psi(1)}\right)^{1+\frac{j-1}{c}}\gamma(\mu_1)\right),\\
        \left(N^3\frac{\psi(N)}{\psi(1)}\right)^\frac{1}{c}\gamma^{-1}\left(\left(N^3\frac{\psi(N)}{\psi(1)}\right)^\frac{j-1}{c}\gamma(\mu_2)\right)&\leq \gamma^{-1}\left(\left(N^3\frac{\psi(N)}{\psi(1)}\right)^{1+\frac{j-2}{c}}\gamma(\mu_2)\right),\\
        &\vdots\\
        \left(N^3\frac{\psi(N)}{\psi(1)}\right)^\frac{1}{c}\gamma^{-1}\left(\left(N^3\frac{\psi(N)}{\psi(1)}\right)^\frac{1}{c}\gamma(\mu_j)\right)&\leq \gamma^{-1}\left(N^3\frac{\psi(N)}{\psi(1)}\gamma(\mu_j)\right),\\
        \left(N^3\frac{\psi(N)}{\psi(1)}\right)^\frac{1}{c}\mu_{j+1}&\leq \gamma^{-1}\left(N^3\frac{\psi(N)}{\psi(1)}\gamma(\mu_j)\right),\\
        \gamma\left(\left(N^3\frac{\psi(N)}{\psi(1)}\right)^\frac{1}{c}\mu_{j+1}\right)&\geq N^3\frac{\psi(N)}{\psi(1)}\gamma(\mu_j),\\
        \gamma\left(\frac{\varphi(\mu_{j+1})}{\gamma(\mu_j)}\right)&\geq N^3\frac{\psi(N)}{\psi(1)}\gamma(\mu_j).
    \end{align*}
    Therefore
    \[
    \|f_l\|_\infty\varphi\left(\frac{\|f_l\|_1}{\|f_l\|_\infty}\right)\geq \frac{2^c-1}{8N^3}\frac{\mu_j}{\varphi(\mu_j)}\gamma\left(\frac{\varphi(\mu_{j+1})}{\varphi(\mu_j)}\mu_j\right)\geq \frac{2^c-1}{8}\frac{\psi(N)}{\psi(1)},
    \]
    and thus 
    \[
    \frac{2^c-1}{8}\psi(N)\leq  \psi(1)\|f_l\|_\infty\varphi\left(\frac{\|f_l\|_1}{\|f_l\|_\infty}\right)\leq \displaystyle\sum_{n=1}^\infty \psi(n)\|f_n\|_\infty\varphi\left(\frac{\|f_n\|_1}{\|f_n\|_\infty}\right).
    \]
\end{proof}

\begin{prop}\label{thm3}
    Let X be an r.i.\ Banach space such that $X\hookrightarrow QA_{\varphi,\psi}$ with norm less than or equal to $K$. Let $\{\omega_N(X)\}_{N=1}^\infty$ be the sequence defined by $\omega_N(X)=\displaystyle\sup_{1\leq j\leq 2N}\frac{\varphi_X(\mu_j)}{\varphi(\mu_j)}$, where $\{\mu_j\}_{j=1}^{2N}$ is defined similarly as in \eqref{def-posloupnost}, but with $\mu_1$ less than or equal to the corresponding value in (\ref{def-posloupnost}). Then 
    \[
    \displaystyle\inf_{N\geq2} \frac{\omega_N(X)}{\psi(N)}\geq \frac{2^c-1}{8K}.
    \]
\end{prop}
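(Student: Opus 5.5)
We plan to test the embedding $X\hookrightarrow QA_{\varphi,\psi}$ on the function built in Proposition~\ref{thm2}, and to use the triangle inequality in $X$ (a genuine norm, since $X$ is Banach) to bound its $X$-norm from above by $\omega_N(X)$. Fix $N\geq2$ and let $\{\mu_j\}_{j=1}^{2N}$ be the sequence defined by \eqref{def-posloupnost}, starting from a $\mu_1$ not exceeding the value used there. We first check that the proof of Proposition~\ref{thm2} goes through verbatim with this smaller starting point.

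The only properties of $\mu_1$ used there are the following: $\mu_1\le \frac p2$; $\varphi$ is strictly concave on $(0,\mu_1]$, so that $\gamma$ is invertible on the relevant range; and $x\gamma^{-1}(x\gamma(\mu_1))\le p$ for $x\geq1$. The first two are inherited by any smaller $\mu_1$. For the third, $\gamma^{-1}$ is decreasing and $\gamma(\mu_1)$ becomes larger, so the quantity $x\gamma^{-1}(x\gamma(\mu_1))$ only decreases and the bound by $p$ persists. The subsequent estimates (\eqref{posloupnost}, the chain yielding $\gamma\bigl((N^3\psi(N)/\psi(1))^{1/c}\mu_{j+1}\bigr)\ge N^3\frac{\psi(N)}{\psi(1)}\gamma(\mu_j)$, and the disjoint sets $A_j$ with $\lambda(A_j)=\mu_j$) depend only on the recursion and on these facts. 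Hence Proposition~\ref{thm2} gives
\[
\|f\|_{\varphi,\psi}\ge \frac{2^c-1}{8}\psi(N),\qquad f=\sum_{j=1}^{2N}\frac{1}{2N\varphi(\mu_j)}\chi_{A_j}.
\]
Confirming this robustness in $\mu_1$ is the one point needing care; everything else is immediate.

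Next we bound $\|f\|_X$ directly:
\[
\|f\|_X\le\sum_{j=1}^{2N}\frac{\|\chi_{A_j}\|_X}{2N\varphi(\mu_j)}=\frac{1}{2N}\sum_{j=1}^{2N}\frac{\varphi_X(\mu_j)}{\varphi(\mu_j)}\le \omega_N(X).
\]
Combining this with the embedding gives
\[
\frac{2^c-1}{8}\psi(N)\le\|f\|_{\varphi,\psi}\le K\|f\|_X\le K\omega_N(X).
\]
Dividing by $K\psi(N)$ and taking the infimum over $N\ge2$ proves the claim.
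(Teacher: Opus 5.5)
Your proposal is correct and matches the paper's proof: apply the embedding to the function $f$ from Proposition~\ref{thm2}, then bound $\|f\|_X$ by $\omega_N(X)$ using the triangle inequality in $X$. Your explicit check that Proposition~\ref{thm2} still holds for a smaller $\mu_1$ (via $\mu_1\le p/2$, strict concavity on $(0,\mu_1]$, and monotonicity of $\gamma^{-1}$ keeping $x\gamma^{-1}(x\gamma(\mu_1))\le p$) is sound, and it fills in a point the paper leaves implicit.
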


\begin{proof}
    Let $f=\sum_{j=1}^{2N}a_j\chi_{A_j}$ be the function constructed in the proof of Proposition \ref{thm2}. Since $f$ is a simple function, $f$ is contained in $X$. From Proposition \ref{thm2}, we have that for all $N\in\mathbb{N}$, $N\geq2$,
    \begin{align*}
        \frac{2^c-1}{8}\psi(N)&\leq \left\|\displaystyle\sum_{j=1}^{2N} a_j\chi_{A_j}\right\|_{\varphi,\psi}\leq K\left\|\displaystyle\sum_{j=1}^{2N} a_j\chi_{A_j}\right\|_X\leq K\displaystyle\sum_{j=1}^{2N} a_j\|\chi_{A_j}\|_X\\
        &=K\displaystyle\sum_{j=1}^{2N} \frac{\|\chi_{A_j}\|_X}{2N\varphi(\mu_j)}=\frac{K}{2N}\displaystyle\sum_{j=1}^{2N} \frac{\varphi_X(\mu_j)}{\varphi(\mu_j)}\leq K\omega_N(X),
    \end{align*}
    which completes the proof.
\end{proof}

Let us now recall the definition of the function $\tau$. We define 
\begin{equation*}
    \tau(t)=\begin{cases}
            \varphi(t)\psi\left(\overline{\log}\,\left(\frac{\varphi(t)}{t}\right)\right)& \quad \text{if }\,t\in (0,1],\\
            0 & \quad \text{if }\,t=0,
        \end{cases}
\end{equation*}
where $\overline{\log}\,t=1+\log^+t$, $t>0$.
Using the previous proposition, we are now prepared to prove our second main result, Theorem \ref{trida neobs}.

    \begin{proof}[Proof of Theorem \ref*{trida neobs}]
        \ref*{trida neobss}
        Let us define $\gamma(t)=\frac{\varphi(t)}{t}$, $t\in(0,1]$. We write $\varphi_X(t)=\varphi(t)\phi(t)$ for all $t\in[0,1]$, where
        \[
        \displaystyle\lim_{t\to 0_+}\frac{\phi(t)}{\psi(\overline{\log}\,\gamma(t))}=0.
        \]
        Given $\varepsilon>0$, we find $\delta>0$ such that $\phi(t)<\varepsilon\psi(\overline{\log}\,\gamma(t))$, for all $t\in(0,\delta]$. Pick $N\in\mathbb{N}$, $N\geq 2$ and the sequence $\{\mu_j\}_{j=1}^{2N}$ constructed in the proof of Proposition \ref{thm2} with $\mu_1\leq\delta$. Then we have 
        \[
        \phi(\mu_j)<\varepsilon\psi(\overline{\log}\,\gamma(\mu_j)),\quad 1\leq j\leq 2N.
        \]
        By the definition of $\mu_j$, we have
        \begin{align*}
            \overline{\log}\,\gamma(\mu_j)&\leq \frac{1}{c}\overline{\log}\,\left(N^3\frac{\psi(N)}{\psi(1)}\right)+\overline{\log}\,\gamma(\mu_{j-1})\leq \frac{4}{c}\overline{\log}\,N+\overline{\log}\,\gamma(\mu_{j-1})\\
            &\leq\dotsb\leq \frac{4(j-1)}{c}\overline{\log}\,N+\overline{\log}\,\gamma(\delta)
            \leq \frac{8N}{c}\overline{\log}\,N+\overline{\log}\,\gamma(\delta)\leq N^2
        \end{align*}
        for big enough $N$. Hence, using (\ref{predpoklad2}),
        \[
        \phi(\mu_j)<\varepsilon\psi(\overline{\log}\,\gamma(\mu_j))\leq \varepsilon\psi(N^2)\leq C\varepsilon\psi(N).
        \]
        It then follows that
        \[
        \omega_N(X)=\displaystyle\sup_{1\leq j\leq 2N}\frac{\varphi_X(\mu_j)}{\varphi(\mu_j)}=\displaystyle\sup_{1\leq j\leq 2N}\phi(\mu_j)\leq C\varepsilon\psi(N).
        \]
        Thus
        \[
        \displaystyle\inf_{N\geq 2}\frac{\omega_N(X)}{\psi(N)}\leq C\varepsilon.
        \]
        Therefore Proposition \ref{thm3} implies that $QA_{\varphi,\psi}$ does not contain $X$.

    \ref*{vnor}
    Let $\gamma$ and $\mu_1$ be the same as in the proof of Proposition \ref{thm2}. Since $\gamma(0_+)=\infty$, we find $\delta\in(0,\min\{t_0,\mu_1\}]$ such that $\gamma(t)\geq1$, for $t\in(0,\delta]$. Let us define $s(x)=\gamma^{-1}(e^{x-1})$, then $s$ is a decreasing function on $[\overline{\log}\,\gamma(\delta),\infty)$. It is sufficient to show that $s$ satisfies the conditions of Proposition \ref{prop o vnoreni} on $[\overline{\log}\,\gamma(\delta),\infty)$, because then $\tau= \alpha_s\asymp\varphi_s$ on $[0,\delta]$, and therefore $\tau\asymp \varphi_s$ on $[0,1]$. 
    
    Clearly $\displaystyle\lim_{x\to\infty}s(x)=0$ and $\tau\circ s=(\varphi\circ s)\psi$ is a non-increasing function on $[\overline{\log}\,\gamma(\delta),\infty)$. By the definition of $\gamma$, we have on $[\gamma(\delta),\infty)$,
    \[
        \gamma\left(\frac{\varphi(1)}{x}\right)\leq \frac{x}{\varphi(1)}\varphi\left(\frac{\varphi(1)}{x}\right)\leq x,
    \]
    and so
    \[
        \frac{\varphi(1)}{x}\geq \gamma^{-1}(x).
    \]
    Since $\varphi(1)e^{1-x}(x+1)^\frac{1}{c}\to 0$, for $x\to \infty$, we find $x_0\geq \overline{\log}\,\gamma(\delta)$ such that $\varphi(1)e^{1-x}(x+1)^\frac{1}{c}\leq p$, for all $x\geq x_0$. Hence, by subadditivity of $\psi$ and (\ref{predpoklad}), we have for $x\geq x_0$,
    \begin{align*}
        \varphi(\gamma^{-1}(e^{x-1}))\psi(x)&\leq \varphi(\varphi(1)e^{1-x})\psi([x]+1)\leq \varphi(\varphi(1)e^{1-x})([x]+1)\psi(1)\\
        &\leq \varphi(\varphi(1)e^{1-x})((x+1)^{\frac{1}{c}})^c\psi(1)\\
        &\leq \varphi(\varphi(1)e^{1-x}(x+1)^{\frac{1}{c}})\psi(1)\to 0,
    \end{align*}
     where $[x]$ is the floor function. Thus, $\displaystyle\lim_{x\to\infty}\varphi(s(x))\psi(x)=0$. Clearly for $n\in\mathbb{N}$, $n\geq\overline{\log}\,\gamma(\delta)$,
     \[
     \frac{\varphi(s(n+1))}{s(n+1)}=\gamma(\gamma^{-1}(e^{n}))=ee^{n-1}=e\gamma(\gamma^{-1}(e^{n-1}))=e\frac{\varphi(s(n))}{s(n)},
     \]
    which completes the proof.

    \ref*{char}
    One implication follows from Corollary \ref{cor 1}. Let us suppose that $QA_{\varphi,\psi}=\Lambda_\varphi$. Then, by Theorem \ref{trida neobs}\ref{trida neobss}, we obtain 
    \[
    \lim_{t\to 0_+}\frac{1}{\psi\left(\overline{\log}\,\left(\frac{\varphi(t)}{t}\right)\right)}\not=0.
    \]
    Since
    \[
    \lim_{t\to0_+}\overline{\log}\,\left(\frac{\varphi(t)}{t}\right)=\infty,
    \]
    we have $\psi\asymp1$ on $[1,\infty).$
\end{proof}

We finish this paper by providing examples of the function $\tau$ for various choices of $\varphi$ and $\psi$.

\begin{exampl}
    For $\alpha\in(0,1]$ and $\beta,\gamma \in[0,1]$, let us define
    \[
    \varphi(t)=\begin{cases}
            t^\alpha\left(\overline{\log}\,\frac{1}{t}\right)^\beta & \quad \text{if }\,t\in (0,\min\{1,e^{1-\frac{\beta}{\alpha}}\}],\\
            e^{\alpha-\beta}\left(\frac{\beta}{\alpha}\right)^\beta &\quad \text{if }\,t\in(\min\{1,e^{1-\frac{\beta}{\alpha}}\},1],\\
            0 & \quad \text{if }\,t=0,
        \end{cases}
    \]
    and
    \[
    \psi(t)=\begin{cases}
            (\overline{\log}\,t)^\gamma& \quad \text{if }\,t\in [1,\infty),\\
            t & \quad \text{if }\,t\in[0,1).
        \end{cases}
    \]
    We recall that the choice $\alpha=\beta=\gamma=1$ corresponds to the Arias-de-Reyna space $QA$.
    The choice $\alpha=1$ and $\beta=0$ gives the space $L^1$, and therefore we do not consider it here.
    Thus, on the interval $[0,1]$, we have
        \[
    \tau(t)\asymp\begin{cases}
        t^{\alpha}\left(\overline{\log}\,\frac{1}{t}\right)^{\beta}\left(\overline{\log}\,\overline{\log}\,\frac{1}{t}\right)^\gamma &\quad \text{if }\,\alpha<1,\\[0.5ex]
        t\left(\overline{\log}\,\frac{1}{t}\right)^{\beta}\left(\overline{\log}\,\overline{\log}\,\overline{\log}\,\frac{1}{t}\right)^\gamma &\quad \text{if }\,\alpha=1,\, \beta\not=0.
    \end{cases}
    \]
    \end{exampl}

\section*{Acknowledgments}
The author would like to thank Lenka Slav\'ikov\'a for many valuable comments and suggestions.

\bibliography{bibliography}{}
\bibliographystyle{plain}

\end{document}